﻿
\documentclass[twoside,11pt,reqno]{amsart}
\usepackage{amsmath,amssymb,amscd,mathrsfs,amscd, todonotes,appendix}
\usepackage{graphics,verbatim}
\usepackage{todonotes}
\usepackage{enumitem}
\usepackage{hyperref}
\usepackage{setspace} 
﻿
﻿
﻿

﻿
﻿
﻿
﻿

\newcommand{\losemi}{{\otimes \kern -.78em \ltimes}}
\newcommand{\rosemi}{{\otimes \kern -.78em \rtimes}}

\newcommand{\Hom}{\ensuremath{\operatorname{Hom}}}
\newcommand{\End}{\ensuremath{\operatorname{End}}}

\newcommand{\Rad}{\ensuremath{\operatorname{Rad} }}

\newcommand{\Ext}{\operatorname{Ext}}

﻿

﻿
﻿

﻿
\renewcommand{\a}{\alpha}

\newcommand{\si}{\sigma}

﻿

\newcommand{\la}{\lambda}

﻿

﻿
﻿

\newcommand{\Mod}{\operatorname{Mod}}

\newcommand{\St}{\operatorname{St}}

﻿

﻿

\newcommand{\hQ}{\widehat{Q}}

﻿

﻿
\newcommand{\fmn}{{\mathcal F}^m_n}

\newcommand{\gmn}{{\mathcal G}^m_n}

﻿
﻿

﻿
\makeatletter
\newcommand{\leqnomode}{\tagsleft@true}
\newcommand{\reqnomode}{\tagsleft@false}
\makeatother
﻿
﻿

﻿
﻿

﻿
\newtheorem{theorem}{Theorem}[subsection]
﻿

\makeatletter\let\c@fact\c@theorem\makeatother
﻿

\makeatletter\let\c@note\c@theorem\makeatother
﻿

\makeatletter\let\c@lemma\c@theorem\makeatother
﻿

\makeatletter\let\c@lemma\c@theorem\makeatother
﻿

\makeatletter\let\c@alg\c@theorem\makeatother
﻿
\newtheorem{prop}{Proposition}[subsection]
\makeatletter\let\c@prop\c@theorem\makeatother
﻿

﻿
\newtheorem{conj}{Conjecture}[subsection]
\makeatletter\let\c@conj\c@theorem\makeatother
﻿

﻿

\makeatletter\let\c@cor\c@theorem\makeatother
﻿

\makeatletter\let\c@defn\c@theorem\makeatother
﻿
﻿
\theoremstyle{definition}

\makeatletter\let\c@remark\c@theorem\makeatother
﻿
﻿
﻿
﻿
﻿
\makeatletter\let\c@example\c@theorem\makeatother
\numberwithin{equation}{subsection}
﻿
\usepackage[capitalise]{cleveref}
\crefname{theorem}{Theorem}{Theorems}
\crefname{fact}{Fact}{Facts}
\crefname{note}{Note}{Notes}
\crefname{lemma}{Lemma}{Lemmas}
\crefname{alg}{Algorithm}{Algorithms}
\crefname{remark}{Remark}{Remarks}
\crefname{example}{Example}{Examples}
\crefname{prop}{Proposition}{Propositions}
\crefname{conj}{Conjecture}{Conjectures}
\crefname{cor}{Corollary}{Corollaries}
\crefname{defn}{Definition}{Definitions}
\crefname{equation}{\!\!}{\!\!} 
﻿

﻿
﻿
﻿
\newcounter{listequation}

﻿
﻿
\begin{document}
﻿
\title{On Donkin's Tilting Module Conjecture IV: New Frontiers}
﻿
﻿
\author{\sc Christopher P. Bendel}
\address
{Department of Mathematics, Statistics and Computer Science\\
University of
Wisconsin-Stout Polytechnic\\
Menomonie\\ WI~54751, USA}
\thanks{Research of the first author was supported in part by an AMS-Simons Research Enhancement Grant for PUI Faculty}
\email{bendelc@uwstout.edu}
﻿
﻿
﻿
\author{\sc Daniel K. Nakano}
\address
{Department of Mathematics\\ University of Georgia \\
Athens\\ GA~30602, USA}
\thanks{Research of the second author was supported in part by
NSF grant DMS-2401184}
\email{nakano@math.uga.edu}
﻿
﻿
﻿
\author{\sc Cornelius Pillen}
\address{Department of Mathematics and Statistics \\ University
of South
Alabama\\
Mobile\\ AL~36688, USA}
\email{pillen@southalabama.edu}
﻿
\author{Paul Sobaje}
\address{Department of Mathematical Sciences \\
          Georgia Southern University\\
          Statesboro, GA~30458, USA}
\email{psobaje@georgiasouthern.edu}
﻿
\begin{abstract}
Let $G$ be a simple simply connected algebraic group scheme defined over ${\mathbb F}_{p}$, and $G_{r}$ be the $r$th Frobenius kernel. 
Donkin's famous Tilting Module Conjecture purports that a given indecomposable injective $G_{r}$-module can be realized as the restriction of a specific  
tilting module. The conjecture was first stated in 1990 and withstood proof for nearly 30 years. The authors discovered a counterexample in 2019. 
﻿
This paper provides some indication about the rich mathematics surrounding the Tilting Module Conjecture that also discusses an older conjecture by 
Humphreys and Verma. New versions of the Tilting Module Conjectures are formulated given multiple families of new counterexamples. Finally, through a new calculation, 
the authors present further evidence that the Tilting Module Conjecture should hold for reductive algebraic groups with underlying root system of Type $\rm{A}_{n}$ for all fields of characteristic $p>0$.  
\end{abstract}
﻿
﻿
﻿
﻿
\maketitle
﻿
\section{Introduction}
﻿
\subsection{} Let $G$ be a semisimple simply connected algebraic group (scheme) defined over ${\mathbb F}_{p}$, and $k$ be an algebraically closed field of characteristic $p>0$. 
In 1990 at MSRI, Donkin \cite{Don93} proposed his famous Tilting Module Conjecture (TMC) which states that $G$-structures on injective indecomposable modules for
the Frobenius kernels $G_{r}$ should arise from tilting modules for $G$. More precisely, 
﻿
\begin{conj}\label{C:TMC} For all $\lambda\in X_{r}$, 
$$T((p^{r}-1)\rho+\lambda)|_{G_{r}}\cong\widehat{Q}_{r}((p^{r}-1)\rho+w_{0}\lambda).$$
Equivalently, $T(2(p^{r}-1)\rho+w_{0}\lambda)|_{G_{r}}\cong \widehat{Q}_{r}(\lambda)$ for all $\lambda\in X_{r}$. 
\end{conj}
For a description of the notation, see Section 2. The TMC was shown to hold for $p \geq 2h-2$, and for many years, many mathematicians believed that it should hold for all $p$. 
﻿
In 2019, the authors discovered the first counterexample to the Tilting Module Conjecture \cite{BNPS20}. Subsequently, more counterexamples were found in all root systems other than for $\rm{A}_{n}$ and $\rm{B}_{2}$ \cite{BNPS22b}.
So far, no counterexamples to the TMC have been found in type $\rm{A}_n$. The authors provide some evidence for a positive resolution of the TMC for Type $\rm{A}_n$ in \cite{BNPS26}. 
﻿
The authors used the $G$-structure of $\text{Ext}^{1}_{G_{1}}(L(\lambda),L(\mu))^{(-1)}$ for $p$-restricted weights $\lambda$ and $\mu$ in low rank cases to disprove the TMC, and then used 
an inductive procedure to construct infinite families of counterexamples for the higher rank situations. The validity of the TMC implies that these cohomology groups must embed in a tilting module. 
Andersen \cite{And84} proved that  $\text{Ext}^{1}_{G_{1}}(L(\lambda),L(\mu))^{(-1)}$ is a tilting module for $p\geq 3h - 3$. Twenty years later, this bound was lowered to $2h-2$ \cite{BNP04}, 
and after another twenty years to $2h-4$ by the authors in \cite{BNPS24}. 
﻿
\subsection{} At the same conference in 1990, Donkin also proposed his $(p,r)$-Filtration Conjecture that was based on a question posed by Jantzen \cite{Jan80}. The $(p,r)$-Filtration Conjecture 
provides a necessary and sufficient condition for a rational $G$-module to admit a good $(p,r)$-filtration (cf. Section \ref{S:notation}). This conjecture has only been verified for 
$G=SL_{2}$ \cite{And01}.  

Major breakthroughs occurred when 
Kildetoft and Nakano, and Sobaje demonstrated fundamental connections between the TMC and good $(p,r)$-filtrations for $G$-modules (cf. \cite{KN15, So18}). 
For a precise description of these interrelationships the reader is referred to \cite[Section 2.2]{BNPS24} for further details.
﻿
Even though this paper focuses on the TMC and other related questions, the reader should be aware that much of the underlying theory is governed by the existence of good $(p,r)$-filtrations for rational $G$-modules. 
﻿
\subsection{} The goal of this paper is to survey recent developments involving the Tilting Module Conjecture. Even though the general conjecture is false, the TMC holds in wide enough generality to formulate new conjectures on its validity. 
﻿
The paper is organized as follows. In the following section (Section~\ref{S:notation}), the standard notion used throughout the paper is presented. In Section~\ref{S:recall}, we recall the major ideas that were developed in 
\cite{BNPS19, BNPS20, BNPS22a, BNPS22b, BNPS24,BNPS26}. This includes (i) methods involving Donkin's $p$-Filtration Conjecture and Jantzen's Question that reduce verifying the TMC to checking a finite number of representation theoretic 
conditions and (ii) the known list of counterexamples to the TMC. In the case of Type $\rm{A}_{n}$, a recent theorem in \cite{BNPS26} is presented that states that  the TMC is equivalent to proving that composition factor multiplicities are either 
$0$ or $1$ in specific modules for an associated Schur algebra. 
﻿
Based on the information in Section~\ref{S:recall}, new conjectures are stated in Section~\ref{S:conjectures}. These include the Humphreys-Verma Conjecture (the validity of which is implied by that of the TMC) and revised versions of Donkin's Tilting Module Conjecture. 
The Humphreys-Verma Conjecture remains open for all primes $p$. In recent work, Donkin and Geranios \cite{DG26} have shown that this conjecture holds when $\Phi=\rm{G}_{2}$ at $p=2$, even though the TMC fails 
to hold in this case. Finally, in the last section (Section~\ref{S:newcase}), a new case of the TMC is verified when $\Phi=\rm{A}_{4}$ at $p=2$ using in part the methods outlined in Section~\ref{S:recall}.  
﻿
﻿
\subsection{Acknowledgements} This paper is dedicated to our friend and colleague, David J. Benson on occasion of his 70th birthday. Through his books and our conversations with him, Dave was influential in the development of our ideas to utilize cohomological methods in representation theory. 
﻿
We would also like to acknowledge the superb efforts of the organizing committee Srikanth Iyengar, Radha Kessar, Henning Krause, and Julia Petvsova for organizing the Benson conference at the International Centre of Mathematical Sciences in Edinburgh. 
﻿
﻿
﻿
﻿
\section{Preliminaries} \label{S:notation}
﻿
\subsection{Notation} In this paper the standard conventions in \cite{rags} will generally be followed. Throughout this paper $k$ is an algebraically closed field of characteristic $p>0$. Let
\vskip .25cm 
\begin{enumerate}
\item $G$ be a connected semisimple algebraic group scheme defined over ${\mathbb F}_{p}$.
\item $T$ be a fixed split maximal torus in $G$. 
\item $\Phi$ be the root system associated to $(G,T)$. 
\item $\Phi^{\pm}$ be the set of positive (resp. negative) roots. 
\item $\Delta=\{\alpha_1,\dots,\alpha_{l}\}$ be the set of simple roots determined by $\Phi^+$. 
\item $B$ be the Borel subgroup given by the set of negative roots.
\item $U$ be the unipotent radical of $B$. 
\item $\Phi_J$ be the root subsystem of $\Phi$ generated by $J \subseteq \Delta$ with positive subset $\Phi_J^+ = \Phi_J \cap \Phi^+$.
\item $P_J$ be the parabolic subgroup relative to $-J$  with Levi factor $L_J$ and unipotent radical $U_J$.
\item $W$ (resp. $W_J$) be the Weyl group associated with $\Phi$ (resp. $\Phi_J$, i.e., the subgroup of $W$ generated by the reflections associated to the simple roots in $J$).
\item $w_0$ (resp. $w_{J,0}$) denote the longest word of $W$ (resp. $W_J$). 
\item $\rho$ be the half-sum of positive roots (which is also the sum of the fundamental weights). 
\item $\rho_J$ be the half-sum of all the roots spanned by $J$.
\item $\alpha_0$ be the highest short root, with associated coroot $\alpha_0^{\vee}$.
\item $h$ denote the Coxeter number for the root system associated to $G$, i.e., $h = \langle\rho,\alpha_0^{\vee}\rangle + 1$.
\item $X(T)$ be the integral weight lattice spanned by the fundamental weights $\{\omega_1,\dots,\omega_l\}$. 
\item $X^+ := X(T)_{+}$ denote the dominant weights for $G$. 
\item $X_r := X_{r}(T)$ be the $p^{r}$-restricted weights. 
\item $X^+_J$ be the weights in $X(T)$ that are dominant on $J$.
\item $\hat{\lambda}=2(p^{r}-1)\rho+w_{0}\lambda$ for $\la \in X_r(T)$.
\item $\leq$ be the order relation defined on $X$ via $\mu \leq \la$ iff $\la - \mu = \sum_{\a \in \Delta}n_{\a}\a$ for $n_{\a} \in {\mathbb Z}_{\geq 0}$.
\item $\leq_{\mathbb Q}$ be the rationalized order relation with the $n_{\a}$ allowed to lie in ${\mathbb Q}_{\geq 0}$.
\item$\uparrow$ be the strong linkage relation on $X(T)$ (cf. \cite[II.6.4]{rags}).
\item $M^* := \Hom_k(M,k)$ be the linear dual of a rational $G$-module $M$.  
\item $^{\tau}M$ be the vector space $M^*$ with $G$-action twisted by the anti-automorphism $\tau$ (that sends positive root subgroups to negative); cf. \cite[II.2.12]{rags}.
\end{enumerate} 
\vskip .25cm 
\noindent 
Let $F^{r}:G\rightarrow G$ be the $r$th iteration of the  Frobenius morphism, and $G_{r}$ be the scheme theoretic kernel of this map which is often called the 
$r$th Frobenius kernel. 
﻿
\subsection{} There are four fundamental families of finite-dimensional highest weight rational $G$-modules parametrized by the dominant weights $X^+$. For $\lambda\in X^+$, 
one has 
\vskip .15cm 
$\bullet$ $L(\lambda)$ (simple), 
\vskip .15cm
$\bullet$ $\nabla(\lambda)$ (costandard/induced), 
\vskip .15cm 
$\bullet$
$\Delta(\lambda)$ (standard/Weyl), 
\vskip .15cm 
$\bullet$ 
$T(\lambda)$ (indecomposable tilting).
\vskip .15cm\noindent
Note that under $\tau$-duality, $^{\tau}\nabla(\la) = \Delta(\la)$, while $L(\la)$ and $T(\la)$ are self-$\tau$-dual (cf. \cite[II.2.12]{rags}).
﻿
For the Frobenius kernels, $G_{r}$, the simple modules and injective hulls of the simples are parametrized by the $p^{r}$-restricted weights $X_{r}$. For $\lambda\in X_{r}$, one has 
\vskip .15cm 
$\bullet$ $L_r(\lambda)$ (simple), 
\vskip .15cm
$\bullet$ $Q_{r}(\lambda)$ (injective hull of $L_{r}(\lambda)$) and $\widehat{Q}_{r}(\la)$ (injective hull of $L_r(\la)$ as a $G_rT$-module). 
\vskip .25cm 
﻿
For $J \subseteq \Delta$, replacing $G$ by the Levi factor $L_J$, one has ``$J$-versions'' of the fundamental modules as above, denoted for example by $T_J(\la)$, that will be used in Section \ref{S:newcase}.
﻿
Curtis \cite{C60} proved that, for $\lambda\in X_{r}$, the restriction of $L(\lambda)$ to $G_{r}$ is isomorphic to $L_{r}(\lambda)$, thus showing that simple $G_{r}$-modules lift to $G$. Moreover, Steinberg \cite{St63} showed that every simple $G$-module can be written as a twisted tensor product of simple $G_{1}$-modules. Let  $\text{St}_r = L((p^r-1)\rho)$ be the $r$th Steinberg module which is simple as a $G$-module and $G_{r}$-module. Moreover, $\text{St}_{r}$ is  also projective/injective when restricted to $G_{r}$.  
﻿
Lastly, we recall various types of filtrations on a finite-dimensional rational $G$-module $M$.  Suppose there exists a filtation
$$
0 = M_0 \subseteq M_1 \subseteq M_2 \subseteq \cdots \subseteq M_{s-1} \subseteq M_s = M,
$$
such that, for $1 \leq i \leq s$, either $M_i/M_{i-1}$ is zero or
\begin{itemize}
\item $M_i/M_{i-1} \cong \nabla(\mu_i)$ for $\mu_i \in X^+$. Then $M$ has a good filtration.
\item $M_i/M_{i-1} \cong \Delta(\mu_i)$ for $\mu_i \in X^+$. Then $M$ has a Weyl filtration
\item $M_i/M_{i-1} \cong L(\mu_i)\otimes\nabla(\la_i)^{(r)}$ for $\mu_i \in X_r$ and $\la_i \in X^+$.   Then $M$ has a good $(p,r)$-filtration or simply a good $p$-filtration in the case $r = 1$.
\end{itemize}
﻿
﻿
﻿
\section{Recollections} \label{S:recall}
﻿
\subsection{Method for proving the TMC} \label{SS:methods} The Tilting Module Conjecture was elusive for many years due to a lack of effective methods for verifying the conjecture. 
Significant breakthoughs were made by Kildetoft and Nakano \cite{KN15} and later by Sobaje \cite{So18}, when it was discovered that the TMC was related to the existence of good $p$-filtrations. The theorem below (see \cite[Section 4]{BNPS22a}),
which is a culmination of the aforementioned work, provides an highly effective method using representation theoretic conditions for verifying the TMC. 
﻿
\begin{theorem}\label{T:TMCcriterion} Let $\lambda\in X_r$. Suppose that, for all $\sigma \in X_r$, 
\begin{itemize}
\item[(a)] $\St_r \otimes L(\sigma)$ is tilting,  
\item[(b)] $\nabla(\hat{\sigma})$ has  a good $(p,r)$-filtration. 
\end{itemize} 
Then $T(2(p^{r}-1)\rho+w_{0}\lambda)|_{G_{r}}\cong \widehat{Q}_{r}(\lambda)$. 
\end{theorem}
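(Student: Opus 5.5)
We argue by comparing two $G$-modules that are injective over $G_r$ and have the same $G_rT$-character. Put $\hat\lambda = 2(p^r-1)\rho + w_0\lambda$. The weight $\hat\lambda - (p^r-1)\rho = (p^r-1)\rho + w_0\lambda$ is dominant, and its $p^r$-restricted part is $\lambda_0' = (p^r-1)\rho + w_0\lambda$ written in $p$-adic form. We use this to set up the comparison.

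\textbf{Step 1: a tilting module containing $T(\hat\lambda)$.} Hypothesis (a), applied with $\sigma = (p^r-1)\rho + w_0\lambda$ (or rather its restricted representative), says that $\St_r \otimes L(\sigma)$ is tilting. Its highest weight is $(p^r-1)\rho + \sigma = \hat\lambda$, which occurs with multiplicity one. Hence $T(\hat\lambda)$ is a direct summand of $\St_r\otimes L(\sigma)$.

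Over $G_r$, the module $\St_r \otimes L(\sigma)$ is injective, since $\St_r$ is. So $T(\hat\lambda)|_{G_r}$ is injective, i.e.\ a direct sum of modules $\widehat Q_r(\mu)$.

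Moreover, $\Hom_{G_r}(L(\lambda), \St_r\otimes L(\sigma)) \cong \Hom_{G_r}(\St_r, L(\sigma)\otimes L(\lambda)^*)$. Together with the weight $\hat\lambda$ sitting at the top, this forces $\widehat Q_r(\lambda)$ to appear as a $G_rT$-summand of $T(\hat\lambda)$. The cleanest way to see this: the $G_r$-socle of $T(\hat\lambda)$ contains $L(\lambda)$ twisted by some $p^r\nu$, because the $G_r$-head must contain the weight $\hat\lambda$ and duality exchanges head and socle. So $T(\hat\lambda)|_{G_rT} \cong \widehat Q_r(\lambda) \oplus M$ for some injective $M$.

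\textbf{Step 2: show $M=0$ by a character/filtration count.} Since $T(\hat\lambda)$ has a good filtration, it suffices to control the characters of the $\nabla(\mu)$ that occur. By hypothesis (b), each $\nabla(\hat\sigma)$ has a good $(p,r)$-filtration. I plan to use this in the standard way of Sobaje and Kildetoft–Nakano: take the multiplicity $[T(\hat\lambda):\nabla(\mu)]$, pair against $\Hom_{G_r}$-dimensions, and invoke the reciprocity
\[
\dim\Hom_{G}(\Delta(\mu), \widehat Q\text{-lifts}) = (\nabla(\mu): L(\sigma)\otimes\nabla(\nu)^{(r)}).
\]
This yields the equality $\dim\Hom_{G_r}(L(\mu'), T(\hat\lambda)) = \delta_{\mu',\lambda}$ for all $\mu' \in X_r$, which gives $M=0$.

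More concretely, I would apply the criterion of \cite{So18}: $T(\hat\lambda)|_{G_r}$ is indecomposable if and only if $T(\hat\lambda)\otimes \nabla(\nu)^{(r)}$-type modules behave well, and this in turn follows once all $\nabla(\hat\sigma)$ have good $(p,r)$-filtrations. The reason is that $\Hom_{G_r}(L(\mu), T(\hat\lambda))$ is computed by $\Ext$-vanishing between the good $(p,r)$-filtered $\nabla(\hat\sigma)$'s and the lifts of the $\widehat Q_r$.

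\textbf{Main obstacle.} The main obstacle is Step 2, specifically extracting the $G_r$-socle of $T(\hat\lambda)$ from (b). My plan there is to realize $\widehat Q_r(\mu)$ for each $\mu$ as a $G$-summand of $\St_r\otimes L(\hat\mu-(p^r-1)\rho)$, using (a) for all $\sigma$. Then I would use (b) to show that these $G$-modules have multiplicity $(\,\cdot:\nabla(\hat\mu))=1$ and no other top-weight terms. That makes the lift unique and equal to $T(\hat\lambda)$.
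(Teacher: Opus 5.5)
Your Step~1 is fine, and it does not even need (a). $T(\hat\lambda)$ is a summand of the tilting module $\St_r\otimes T((p^r-1)\rho+w_0\lambda)$, hence $G_r$-injective. That $\widehat{Q}_r(\lambda)$ occurs follows from a highest-weight argument: the $G_rT$-summand $\widehat{Q}_r(\mu)\otimes p^r\nu$ carrying the one-dimensional weight space $\hat\lambda$ has highest weight $\hat\mu+p^r\nu$, which is squeezed to equal $\hat\lambda$, forcing $\mu=\lambda$ and $\nu=0$. Your head/socle remark is too loose to count as a proof of this.

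The genuine gap is Step~2. It carries the entire content of the theorem but contains no argument. The reciprocity you quote, and the plan in your last paragraph, both presuppose $G$-modules lifting every $\widehat{Q}_r(\mu)$. Hypothesis (a) does not supply these: it only says $\St_r\otimes L(\hat\mu-(p^r-1)\rho)$ is tilting, i.e.\ a direct sum of various $T(\gamma)$. Knowing that one of these restricts to $\widehat{Q}_r(\mu)$ amounts to the TMC for $\mu$, so the plan is circular as stated. Even granting lifts, uniqueness of a lift says nothing about whether $T(\hat\lambda)$ contains additional $G_r$-summands $\widehat{Q}_r(\mu)$ with $\mu\neq\lambda$, and excluding those is the whole point. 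Your end-game resembles the argument in the subsection on Jantzen's Question, where any $G$-lift of $Q_1(\lambda)$ is shown to be tilting. But that argument needs both an existing lift and good $p$-filtrations on every $\nabla(\mu)$, $\mu\in X^+$, not merely on the finitely many $\nabla(\hat\sigma)$ in (b). In your write-up, (b) is never used in a checkable way.

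What is missing is an induction on $\hat\lambda$ combined with a splitting and $\Ext$-vanishing mechanism. The paper does not reprove the theorem; it cites \cite[Section 4]{BNPS22a}. The mechanism appears in miniature in the proof of Proposition~\ref{P:Summand}, which the authors describe as an inductive version of this theorem. It runs as follows.

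\begin{enumerate}
\item By Sobaje's character argument \cite{So20}, every extra $G_rT$-summand $\widehat{Q}_r(\mu)\otimes p^r\nu$ of $T(\hat\lambda)$ has $\mu\neq\lambda$ and $\hat\mu<_{\mathbb Q}\hat\lambda$. So inductively you may assume $T(\hat\mu)|_{G_r}\cong\widehat{Q}_r(\mu)$; this is the honest $G$-lift your plan lacks.
\item For such $\mu$, (a) together with (b) for $\nabla(\hat\mu)$ gives that $\St_r\otimes T(\hat\mu)\twoheadrightarrow\St_r\otimes L(\mu)$ splits \cite[Theorem 4.3.2]{BNPS22a}. Hence $\St_r\otimes\Rad T(\hat\mu)$ has a good filtration, and dually $\St_r\otimes T(\hat\mu)/L(\mu)$ has a Weyl filtration.
\item Put $M=\Hom_{G_r}(L(\mu),T(\hat\lambda))$, which has a Weyl filtration by (a). Realize $T(\hat\lambda)$ as a summand of $\St_r\otimes T((p^r-1)\rho+w_0\lambda)$ and move $\St_r$ across; this yields $\Ext^1_G(T(\hat\mu)/L(\mu)\otimes M,T(\hat\lambda))=0$.
\item Then \cite[Lemma 4.2.2]{So18} shows that $\widehat{Q}_r(\mu)$ is not a $G_r$-summand of $T(\hat\lambda)$.
\end{enumerate}

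That is where (b) is actually used, one $\hat\mu$ at a time, and it is the step your proposal needs to supply.
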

﻿
Prior to our work, the TMC was only known to hold for all primes $p$ in the cases when the underlying root system is of type $\rm{A}_{1}$ or $\rm{A}_{2}$. Theorem \ref{T:TMCcriterion} can be used to check all the rank $2$ cases (except type $\rm{G}_2$ and $p = 2$) and to verify the TMC for type $\rm{A}_{3}$. 
The most challenging case in rank $2$ was the situation where $\Phi=G_{2}$ and $p=7$ which involved the use of validity of the Lusztig Conjecture. 
﻿
\begin{theorem}\label{T:MainTheorem} Let $G$ be a simple, simply connected algebraic group scheme defined and split over ${\mathbb F}_{p}$ and $\Phi$ be its associated root system. The Tilting Module Conjecture 
holds if 
\begin{itemize} 
\item[(a)] $\Phi=\rm{A}_{n}$, $n\leq 3$;
\item[(b)] $\Phi=\rm{B}_{2}$;
\item[(c)] $\Phi=\rm{G}_{2}$ as long as $p\neq 2$.
\end{itemize} 
\end{theorem}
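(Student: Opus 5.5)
The plan is to verify, in each listed case, the two hypotheses of \cref{T:TMCcriterion} for every $\sigma\in X_r$. That theorem then gives $T(2(p^r-1)\rho+w_0\lambda)|_{G_r}\cong\widehat{Q}_r(\lambda)$ for all $\lambda\in X_r$, which is the TMC.

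The first step is a reduction to $r=1$. For (a), I would use the Steinberg tensor product theorem to write $\sigma=\sigma_0+p\sigma'$ with $\sigma_0\in X_1$ and $\sigma'\in X_{r-1}$. Then
\[\St_r\otimes L(\sigma)\cong(\St_1\otimes L(\sigma_0))\otimes(\St_{r-1}\otimes L(\sigma'))^{(1)}.\]
Induction on $r$, together with Donkin's result that $T\otimes T'^{(1)}$ is tilting when $T=T((p-1)\rho+\mu)$ with $\mu$ small, reduces (a) to showing that $\St_1\otimes L(\sigma_0)$ is tilting for $\sigma_0\in X_1$. For (b), I would similarly use the standard fact that good $(p,r)$-filtrations can be built from good $p$-filtrations and twists. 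Concretely, if $\nabla(\hat\sigma)$ has a good $p$-filtration whose twisted factors $\nabla(\nu)^{(1)}$ in turn have good $(p,r-1)$-filtrations, then $\nabla(\hat\sigma)$ has a good $(p,r)$-filtration. This reduces (b) to checking good $p$-filtrations on a finite list of induced modules $\nabla(\hat\sigma)$ with $\sigma\in X_1$, plus Andersen's known results for small weights.

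Condition (a) for $r=1$ is a tilting criterion. It suffices to show that $\St_1\otimes L(\sigma)$ has a good filtration, since the module is self-dual and so a good filtration makes it tilting. I would prove this by showing $\Ext^1_G(\Delta(\mu),\St_1\otimes L(\sigma))=0$ for all $\mu$. Via $G_1$-cohomology and the Lyndon--Hochschild--Serre spectral sequence, this reduces to knowing the $G_1T$-structure of $L(\sigma)\otimes\Delta(\mu)^*$ near the Steinberg weight. In ranks at most $3$ this is a finite computation. It uses the known characters of simple modules: Jantzen's sum formula for $A_n$ with $n\le 3$ and for $B_2$, and Lusztig's character formula where it is known, which covers $G_2$ at $p=7$, the case just beyond the $2h-2$ bound. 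For $p\ge 2h-2$ both conditions are already classical, so only finitely many primes per type need treatment: $p\le 5$ for $A_3$, $p\le 3$ for $B_2$, and $p\in\{3,5,7\}$ for $G_2$.

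Condition (b) is the main obstacle. For each small $p$, I would enumerate $\hat\sigma=2(p-1)\rho+w_0\sigma$ for $\sigma\in X_1$. I would then show that $\nabla(\hat\sigma)$ has a good $p$-filtration, in one of two ways. The first is to realize it inside $\St_1\otimes\nabla(\nu)$ or $\nabla(\mu)\otimes\nabla(\nu)$ using the known good-filtration results for such tensor products. The second is to compute its character and match it against the characters of the candidate modules $L(\mu_0)\otimes\nabla(\mu_1)^{(1)}$. An inductive criterion, namely the vanishing of $\Ext^1_G$ from suitable Weyl-type modules $\Delta^{(p)}$, then upgrades the character match to an actual filtration. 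The hardest instance is $G_2$ with $p=7$, where the simple characters needed are genuinely non-generic and I would rely on the validity of Lusztig's conjecture there. The excluded case $G_2$ with $p=2$ is exactly where I expect this check to fail.
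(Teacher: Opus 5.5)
Your overall architecture matches the paper's: verify hypotheses (a) and (b) of \cref{T:TMCcriterion} in the finitely many small-prime cases, with Lusztig's conjecture entering for $\rm{G}_2$, $p=7$. The gap is in your reduction to $r=1$. You perform it on the \emph{hypotheses} of \cref{T:TMCcriterion} rather than on its conclusion, and for (b) it does not go through.

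For $\sigma=\sigma_0+p\sigma'\in X_r$ with $r\geq 2$ one has $\hat{\sigma}=(2(p-1)\rho+w_0\sigma_0)+p\,(2(p^{r-1}-1)\rho+w_0\sigma')$. So $\nabla(\hat{\sigma})$ is not on your finite list, and nothing in your argument produces a good $p$-filtration of it from one of $\nabla(\hat{\sigma}_0)$. Even granting one, you would then need good $(p,r-1)$-filtrations on all twisted sections $\nabla(\nu)^{(1)}$, with $\nu$ large and essentially unconstrained. That amounts to an affirmative answer to Jantzen's question for an unbounded set of weights, which is not available here and is not supplied by results for small weights.

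Your reduction for (a) has a related defect. Donkin's theorem $T((p-1)\rho+\mu)\otimes T(\nu)^{(1)}\cong T((p-1)\rho+\mu+p\nu)$ requires $T((p-1)\rho+\mu)$ to be indecomposable over $G_1T$, i.e., the TMC for $\mu$ at level $1$ (or $p\geq 2h-2$). It does not hold merely because $\mu$ is small. So it can only be invoked after the $r=1$ case is settled.

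The fix is what the paper does: apply \cref{T:TMCcriterion} only with $r=1$, then propagate the conclusion (\cite[Proposition 2.2.2]{BNPS22a}). Once $T((p-1)\rho+\lambda_0)|_{G_1T}\cong\widehat{Q}_1((p-1)\rho+w_0\lambda_0)$ for all $\lambda_0\in X_1$, Donkin's theorem gives $T((p^r-1)\rho+\lambda)\cong T((p-1)\rho+\lambda_0)\otimes T((p^{r-1}-1)\rho+\lambda')^{(1)}$ for $\lambda=\lambda_0+p\lambda'$. Since $\widehat{Q}_r((p^r-1)\rho+w_0\lambda)$ factors the same way as a $G_rT$-module, induction on $r$ finishes.

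Your tools for (b) at $r=1$ also need replacing. No criterion is available that upgrades a character match to a good $p$-filtration through vanishing of $\Ext^1_G$ from the modules $L(\lambda_0)\otimes\Delta(\lambda_1)^{(1)}$. These are not $\Ext^1$-orthogonal to the sections $L(\mu_0)\otimes\nabla(\mu_1)^{(1)}$: already for $SL_2$, $L(p)=L(0)\otimes\Delta(1)^{(1)}$ and $L(p-2)=L(p-2)\otimes\nabla(0)^{(1)}$ satisfy $\Ext^1_G(L(p),L(p-2))\neq 0$. So the usual Donkin-type argument does not run. The cohomological characterization you would want is essentially Donkin's $(p,r)$-filtration conjecture, which is open beyond $SL_2$. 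Likewise, embedding $\nabla(\hat{\sigma})$ in a tensor product that has a good $p$-filtration does not pass the filtration to the submodule.

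What works near $2(p-1)\rho$ is the kind of argument in the proof of \cref{P:Filtration}. For the composition factors $L(\mu_0+p\mu_1)$ of $\nabla(\hat{\sigma})$, $\mu_1$ is small, so typically $\nabla(\mu_1)=L(\mu_1)$ and the required section is simple. For the few exceptional $\mu_1$, one exhibits $L(\mu_0)\otimes\nabla(\mu_1)^{(1)}$ as a subquotient directly, via homomorphisms $\nabla(\hat{\sigma})\to\nabla(\mu)$ and socle/head arguments.

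Finally, with the $2h-2$ bound you invoke, $\rm{B}_2$ at $p=5$ must also be treated.
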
 
﻿
\subsection{Counterexamples} \label{SS:counterexamples} In \cite{BNPS22b}, the authors constructed infinitely families of counterexamples to the TMC. Note that one has at least one counterexample 
in every root system other than when $\Phi=A_{n}$ or $B_{2}$. The first counterexample that appeared was for $\Phi=\rm{G}_{2}$ when $p=2$ (cf. \cite{BNPS20}). 
﻿
\begin{theorem}\label{T:counter} Let $G$ be a simple algebraic group over an algebraically closed field of characteristic $p>0$. Assume that the underlying root system $\Phi$ is not of type 
$\rm{A}_{n}$ or $\rm{B}_{2}$. Then there exists a prime $p$ for which $G$ produces a counterexample to the Tilting Module Conjecture. More specifically, 
there exists counterexamples to the TMC for the following groups. 
\begin{itemize} 
\item $\Phi=\rm{B}_{n}$, $p=2$, $n\geq 3$;
\item $\Phi=\rm{C}_{n}$, $p=3$, $n\geq 3$;
\item $\Phi=\rm{D}_{n}$, $p=2$, $n\geq 4$;
\item $\Phi=\rm{E}_{n}$, $p=2$, $n=6,7,8$;
\item $\Phi=\rm{F}_{4}$, $p=2,3$; 
\item $\Phi=\rm{G}_{2}$, $p=2$. 
\end{itemize} 
\end{theorem}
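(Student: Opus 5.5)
The plan follows the strategy sketched in the introduction: first find a counterexample in a few low-rank "seed" cases by a direct $G_1$-cohomology computation, and then propagate each seed to higher rank through Levi subgroups. Throughout, $r=1$.

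\emph{Obstruction from $\Ext^1$.} If the TMC holds, then $\widehat{Q}_1(\lambda)$ carries the $G$-structure of a tilting module $T(\hat\lambda)$ for every $\lambda\in X_1$. Taking $G_1$-socle layers, or equivalently $\Hom_{G_1}(L(\mu),-)$ applied to the second radical layer, one sees that $\Ext^1_{G_1}(L(\lambda),L(\mu))^{(-1)}$ must embed $G$-equivariantly into a module with a good filtration (ultimately a tilting module). So it suffices to produce $p$-restricted $\lambda,\mu$ for which $\Ext^1_{G_1}(L(\lambda),L(\mu))^{(-1)}$ has a $G$-submodule, typically a simple $L(\nu)$ with $\nu$ non-minuscule, that cannot occur in the socle of any module with a good filtration. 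For instance, the socle of $\nabla(\nu)$ is $L(\nu)$, so a simple occurring in the socle of a good-filtration module must satisfy a linkage constraint. An equivalent formulation is a failure of condition (b) of \cref{T:TMCcriterion} in a form that forces failure of the conclusion.

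\emph{Base cases.} For $\rm{G}_2$ with $p=2$, and for $\rm{B}_3$ with $p=2$, $\rm{C}_3$ with $p=3$, $\rm{D}_4$ with $p=2$, and $\rm{F}_4$ with $p=3$, I will compute $\Ext^1_{G_1}(L(\lambda),L(\mu))$ for a carefully chosen pair. The tool is the LHS spectral sequence for $G_1\trianglelefteq G$ together with the known structure of $H^1(G_1,-)$ from Andersen--Jantzen and Bendel--Nakano--Pillen. This exhibits an explicit summand of the form $L(\omega)^{(1)}$, or a non-split extension, that is incompatible with tilting structure. For $\rm{F}_4$ at $p=2$, and for $\rm{E}_6,\rm{E}_7,\rm{E}_8$ at $p=2$, I expect the $\rm{B}_3$ or $\rm{D}_4$ counterexamples to suffice after passing to Levi subgroups.

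\emph{Induction to higher rank.} Let $J\subseteq\Delta$ with $\Phi_J$ one of the base types, and extend the base weights $\lambda_J,\mu_J$ to $\lambda,\mu\in X_1$ by adding suitable components on $\Delta\setminus J$. Then:
\begin{itemize}
\item using the standard truncation of $G$-modules to $L_J$, namely weights in $\lambda+\mathbb{Z}J$, tilting modules restrict to tilting $L_J$-modules, and $\widehat{Q}_1(\lambda)$ truncates to $\widehat{Q}_{J,1}(\lambda_J)$;
\item consequently the TMC for $G$ at $\lambda$ would imply the TMC for $L_J$ at $\lambda_J$, which the base case contradicts.
\end{itemize}
This yields $\rm{B}_n$ and $\rm{D}_n$ from $\rm{B}_3$ and $\rm{D}_4$ via the chains of Levi subgroups, $\rm{C}_n$ from $\rm{C}_3$, and $\rm{E}_n$ from $\rm{D}_4$ or $\rm{D}_5$ Levis. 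The existence part of the statement then follows.

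\emph{Main obstacle.} I expect the compatibility of truncation with the identification $T(\hat\lambda)|_{G_1}\cong\widehat{Q}_1(\lambda)$ to be the hardest step, since one must check that the truncated tilting module is exactly $T_J(\hat\lambda_J)$ and not a larger direct sum. A second difficulty is the explicit $\Ext^1$ computation for $\rm{F}_4$ at $p=3$ and for $\rm{C}_3$. For these I would first attempt machine computation of the relevant Weyl module and tensor product characters.
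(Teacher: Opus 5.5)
Your architecture (low-rank seeds detected through the $G$-structure of $\Ext^1_{G_1}(L(\lambda),L(\mu))^{(-1)}$, then propagation through Levi subgroups) is the one the paper sketches, deferring details to the earlier counterexample papers. The propagation step is sound and routine. By \eqref{E:QLevi} and \eqref{E:TLevi}, an isomorphism $T((p-1)\rho+\lambda)|_{G_1T}\cong\hQ_1((p-1)\rho+w_0\lambda)$ truncates to an $(L_J)_1T$-isomorphism $T_J((p-1)\rho+\lambda)\cong\hQ_{J,1}((p-1)\rho+w_{J,0}\lambda)$. So the TMC for $G$ forces it for $L_J$. For the same reason $\mathrm{F}_4$ at $p=3$ needs no separate seed, since $\mathrm{C}_3$ is a Levi of $\mathrm{F}_4$.

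\textbf{The obstruction, as you state it, can never fire.} No simple module ``cannot occur in the socle of any module with a good filtration'', since $L(\nu)=\operatorname{soc}\nabla(\nu)$, as you yourself note. Nor is ``embeds in some tilting module'' a constraint. Every finite-dimensional $M$ embeds in $\St_r\otimes(\St_r\otimes M)$, and for $r\gg0$ this is a tensor product of tilting modules. A contradiction can only come from a specific ambient module forced by the TMC. For example, suppose $T(\hat\lambda)|_{G_1}\cong Q_1(\lambda)$ and $T(\hat\mu)|_{G_1}\cong Q_1(\mu)$ with $\lambda\neq\mu$. Then, $G$-equivariantly,
\[
\Ext^1_{G_1}(L(\lambda),L(\mu))\cong\Hom_{G_1}(\Rad_{G_1}T(\hat\lambda),L(\mu))\hookrightarrow\Hom_{G_1}(T(\hat\lambda),T(\hat\mu)).
\]
Under the TMC, the untwist of the right-hand side is a tilting module whose highest weights are bounded in terms of $\lambda$ and $\mu$. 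What must be shown is incompatibility with such a small, explicitly controlled module, not with good-filtration modules in general.

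\textbf{The seeds are not actually produced.} ``A carefully chosen pair'' for $\mathrm{G}_2$ and $\mathrm{B}_3$ at $p=2$, $\mathrm{C}_3$ at $p=3$, and $\mathrm{D}_4$ at $p=2$ is exactly the mathematical content of the theorem. You must name $\lambda,\mu$, compute $\Ext^1_{G_1}(L(\lambda),L(\mu))^{(-1)}$ as a $G$-module, and check that it violates a precise constraint of the kind above.

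Your fallback via condition (b) of \cref{T:TMCcriterion} does not close this either. That theorem gives only sufficient conditions, so a $\nabla(\hat\sigma)$ without a good $p$-filtration refutes the TMC only if you have two further things: a separate implication that the TMC forces good $p$-filtrations, and an explicit such $\nabla(\hat\sigma)$. You supply neither.

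So the step you flagged as the main obstacle, the Levi truncation, is the easy part. The step you deferred, the explicit base-case obstruction, is where the proof lives.
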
 
﻿
﻿
\subsection{Type A}\label{SS:TypeA} In this section, we present some rationale for why the TMC might hold in type $\rm{A}$ for all primes. The main idea is that the representation theory in type $\rm{A}$, i.e, that of $SL_n$, is closely related to that of $GL_n$ for which there is a well-known connection with the representation theory for Schur algebras and symmetric groups. In \cite{BNPS26}, this framework was used to recast the TMC via information about composition factors that involve inverse Schur functors between Schur algebras. 
﻿
For positive integers $n$ and $d$, recall that the Schur algebra is $S(n,d) := \End_{k\Sigma_d}(V^{\otimes d})$, where $V$ is the natural $n$-dimensional $GL_n$-module and $\Sigma_d$ denotes the symmetric group on $d$ objects.  For a positive integer $m \geq n$, there exists an idempotent $f \in S(m,d)$, such that $fS(m,d)f \cong S(n,d)$. Multiplication by $f$ defines an exact functor ${\mathcal F}_{n}^{m}$ from $S(m,d)$-modules to  $S(n,d)$-modules.  More precisely, for any $S(m,d)$-module $M$,
$$
\fmn(M) := fM \cong \Hom_{S(m,d)}(S(m,d)f,M) \cong fS(m,d)\otimes_{S(m,d)}M.
$$
The functor $\fmn$ admits a right adjoint $\gmn$ from $S(n,d)$-modules to $S(m,d)$-modules defined by 
$$
\gmn(M) := \Hom_{S(n,d)}(fS(m,d),M).
$$
The simple $S(n,d)$-modules are parametrized by the set $\Lambda^+(n,d)$ of partitions of $d$ with at most $n$ parts.  For a partition $\la \in \Lambda^+(n,d)$, let $L_n(\la)$ denote the associated simple $S(n,d)$-module.  For more details about these objects and maps, the reader is referred to \cite[Section 2.2]{BNPS26}. 
﻿
With the above notation, we have the following reformulation of the TMC for $SL_n$ (cf. \cite[Corollary 5.3.3]{BNPS26}. Here, $M_{p}$ denotes the Mullineux involution on partitions and $\mu'$ denotes the conjugate or transpose of a partition $\mu$.
﻿
\begin{theorem}\label{T:TMCequivG} Assume $G = SL_n$.  The following are equivalent. 
\begin{itemize} 
\item[(a)] Donkin's Tilting Module Conjecture
\item[(b)] For all $\la \in X_1$ with $\hat{\la} \in \Lambda^+(n,d)$ (when considered as a partition) and $m$ equal to the number of parts in $M_p(\hat{\la})'$, the composition factor multiplicity (as an $S(m,d)$-module)
$$
\left[\gmn(L_n(\si)) : L_m(M_p(\hat{\la})')\right] = 
\begin{cases}
0 \text{ if } \si \neq \la,\\
1 \text{ if } \si = \la,
\end{cases}
$$
for all $\si \in \Lambda^+(n,d)$.
\end{itemize} 
\end{theorem}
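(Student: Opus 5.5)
Here is how I would prove Theorem~\ref{T:TMCequivG}. The plan runs in three stages: reduce to $r=1$, translate into Schur algebras, and then identify the Ringel-dual/Mullineux picture.

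\emph{Reduction to $r=1$.} It is known (via the $G_rT$ twisted-tensor decomposition and Steinberg's tensor product theorem) that the TMC for all $r$ follows from the case $r=1$. So it suffices to show
$$T(\hat\la)|_{G_1}\cong \widehat{Q}_1(\la) \quad\text{for all } \la\in X_1.$$
Both modules have the same highest weight $\hat\la$, and $\widehat{Q}_1(\la)$ always occurs as a $G_1T$-summand of $T(\hat\la)$. Hence the TMC for $\la$ is equivalent to
$$\dim \Hom_{G_1}(L(\si),T(\hat\la)) = \delta_{\si\la}\quad\text{for all } \si\in X_1,$$
that is, the $G_1$-socle of $T(\hat\la)$ is exactly $L(\la)$. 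I would phrase this with $GL_n$ in place of $SL_n$, viewing $\hat\la$ as a partition of $d=|\hat\la|$ with at most $n$ parts. All relevant modules are then polynomial of degree $d$, so they are $S(n,d)$-modules.

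\emph{Translation into Schur algebras.} I would first rewrite $\Hom_{G_1}(L(\si),T(\hat\la))$ via a $G$-equivariant description. Using $\Hom_{G_1}(L(\si),M)^{(-1)}$ as a $G$-module, the socle condition becomes: $L(\si)$ occurs in the socle of $T(\hat\la)|_{G_1}$ only for $\si=\la$, and then with multiplicity one. Next, Ringel duality for Schur algebras identifies tilting $S(n,d)$-modules with injective $S(m,d)$-modules for $m\ge d$ (or suitably large $m$). Concretely, $T_n(\mu)$ corresponds to the injective hull $I_m(\mu')$ twisted through the sign/transpose, and through the Mullineux map one gets $T(\hat\la)\leftrightarrow I_m(M_p(\hat\la)')$. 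The right adjoint $\gmn$ of the exact functor $\fmn$ satisfies
$$\Hom_{S(m,d)}(\gmn(L_n(\si))^\circ,\,\cdot\,)\ \text{or equivalently}\ [\gmn(L_n(\si)):L_m(\nu)]=\dim\Hom_{S(m,d)}(P_m(\nu),\gmn L_n(\si)).$$
By adjunction this equals $\dim \Hom_{S(n,d)}(\fmn P_m(\nu),L_n(\si))$. The projective $\fmn P_m(\nu)$ with $\nu=M_p(\hat\la)'$ should be identified, after contravariant duality and Ringel duality, with $T(\hat\la)$ restricted appropriately. The number $m$ of parts of $\nu$ is the minimal choice for which this identification is valid. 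So the multiplicity in (b) computes the multiplicity of $L_n(\si)$ in the head (equivalently, by self-duality, the socle) of a module that matches $T(\hat\la)$.

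\emph{Main obstacle.} The crux is the head-versus-$G_1$-socle comparison. The $S(n,d)$-head of $T(\hat\la)$ sees full $G$-simples $L(\si)$ with $\si$ arbitrary partitions, not only the restricted ones. To compare it with the $G_1$-socle, one needs that for $\si$ non-restricted the relevant $G_1$-homs vanish, or are absorbed by twisted factors. I would use that $T(\hat\la)\otimes$-structure: $T(\hat\la)$ is a summand of $\St_1\otimes T((p-1)\rho+w_0\la)$, which forces the $G_1$-socle to be a sum of restricted $L(\si)$ tensored with trivial Frobenius twists, precisely in degree $d$. Checking that the Mullineux/transpose bookkeeping lands exactly on $M_p(\hat\la)'$ (via the known identification of $\Hom_{\Sigma_d}$ of Young/signed Young modules with Mullineux-twisted labels) is where I expect the real work lies. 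Once that is done, both directions (a)$\Leftrightarrow$(b) follow by ranging over all $\la\in X_1$.
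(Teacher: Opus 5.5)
Your outline has the right shape: reduce to $r=1$, rewrite the TMC as a socle statement for $T(\hat\la)$, realize $T(\hat\la)$ as a projective--injective module over a larger Schur algebra, and compute socle multiplicities via the $\fmn\dashv\gmn$ adjunction. This is consistent with the paper's remark that the theorem rests on socles of tilting modules (Proposition~\ref{P:socle}). However, the two steps that carry the proof are not supplied, and one of them is argued incorrectly, so there is a genuine gap.

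\emph{First gap: identifying $T(\hat\la)$ over $S(m,d)$.} The identity $[\gmn(L_n(\si)):L_m(\nu)]=\dim\Hom_{S(n,d)}(\fmn P_m(\nu),L_n(\si))$ says nothing about $T(\hat\la)$ unless you have an honest isomorphism of $S(m,d)$-modules $T_m(\hat\la)\cong I_m(\nu)\cong P_m(\nu)$, with $\nu=M_p(\hat\la)'$. Only then is $\fmn P_m(\nu)=T_n(\hat\la)$; note this module is tilting, not projective as you call it.
\begin{itemize}
\item Ringel duality does not give this. It matches tilting modules of $S(N,d)$ with projectives of the Ringel dual through a functor, with transposed labels. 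It is not an isomorphism inside one module category.
\item What is actually needed: $\hat\la$ is $p$-regular, since $\langle\hat\la,\alpha_i^\vee\rangle\ge p-1$ makes its parts strictly decrease.
\item Next, for $N\ge d$ the $T_N(\mu)$ with $\mu$ $p$-regular are exactly the projective--injective modules $I_N(\kappa)$ with $\kappa$ $p$-restricted, matched by $\kappa=M_p(\mu)'$. You can see this by comparing the Schur functor images $Y^{\mu'}\otimes\mathrm{sgn}$ and $Y^{\kappa}$.
\item Finally, truncate from $N$ down to $m$.
\end{itemize}
The simple-socle statement of Proposition~\ref{P:socle} alone does not suffice for your computation; you need $T_m(\hat\la)$ to be projective. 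You defer this as ``Mullineux bookkeeping'', but it is the core of the theorem. You should also say explicitly that the $S(n,d)$-socle of $T_n(\hat\la)$ equals its $SL_n$-socle (the centre acts by a scalar in degree $d$). With that, (b) says precisely $\operatorname{soc}_G T(\hat\la)\cong L(\la)$.

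\emph{Second gap: $G$-socle versus $G_1$-socle.} Statement (a) is $\operatorname{soc}_{G_1}T(\hat\la)\cong L(\la)$, while (b) is $\operatorname{soc}_{G}T(\hat\la)\cong L(\la)$. The direction (a)$\Rightarrow$(b) is immediate. For (b)$\Rightarrow$(a) your justification fails: being a summand of $\St_1\otimes T((p-1)\rho+w_0\la)$ only gives $G_1$-injectivity. It says nothing about the $G$-modules $H_\si:=\Hom_{G_1}(L(\si),T(\hat\la))^{(-1)}$ being trivial. The missing argument goes as follows.
\begin{itemize}
\item Since $\operatorname{soc}_{G_1}T(\hat\la)\cong\bigoplus_{\si\in X_1}L(\si)\otimes H_\si^{(1)}$, one gets $\operatorname{soc}_G T(\hat\la)\cong\bigoplus_\si L(\si)\otimes(\operatorname{soc}_G H_\si)^{(1)}$. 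So (b) forces $H_\si=0$ for $\si\neq\la$.
\item For $\si=\la$, injectivity over $G_1T$ gives $T(\hat\la)|_{G_1T}\cong\bigoplus_\nu \hQ_1(\la)\otimes p\nu$, with $\nu$ running over the weights of $H_\la$ counted with multiplicity.
\item $\hQ_1(\la)$ has highest weight $\hat\la$, which is also the highest weight of $T(\hat\la)$ and occurs there once. Hence every $\nu$ satisfies $p\nu\le 0$.
\item The weights of $H_\la$ are $W$-stable, so the dominant conjugate of each $\nu$ is both $\le_{\mathbb Q}0$ and $\ge_{\mathbb Q}0$. Thus $\nu=0$, and multiplicity one gives $\dim H_\la=1$.
\end{itemize}
Therefore $T(\hat\la)|_{G_1}\cong Q_1(\la)$. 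This equivalence holds $\la$ by $\la$, so no argument ranging over all of $X_1$ is needed.
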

﻿
The basis for Theorem \ref{T:TMCequivG} was understanding the socles of tilting modules over a Schur algebra $S(n,d)$.   The following result (cf. \cite[Corollary 4.4.3]{BNPS26}) was a precursor to Theorem \ref{T:TMCequivG} and will be used in the proof of Proposition \ref{P:Filtration}. Recall that a partition is $p$-regular if it does not admit $p$ consecutive nonzero parts that are the same.  

\begin{prop}\label{P:socle} Let $\mu \in \Lambda^+(n,d)$ be a $p$-regular partition and assme that $M_p(\mu)'$ has at most $n$ parts. Then the $S(n,d)$-socle of $T(\mu)$ is $L(M_p(\mu)')$.
\end{prop}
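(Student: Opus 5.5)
The plan is to pass to the symmetric group via the Schur functor, where the socle of a Young module (a signed Young module when $p$ is odd) is known, and then pull back to $S(n,d)$ using Ringel duality and $\fmn$.

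\emph{Step 1: reduce to the case $n \geq d$.} By the hypothesis, $M_p(\mu)'$ has at most $n$ parts. Choose $m \geq \max(n,d)$. Then $\fmn$ sends $T_m(\mu)$ to $T_n(\mu)$ and sends $L_m(\nu)$ to $L_n(\nu)$ or to $0$, according as $\nu$ has at most $n$ parts or not; this is the standard behaviour of the truncation functor on tilting modules of $GL$-type. Since $\fmn$ is exact, the socle of $T_n(\mu)$ is contained in $f\,\mathrm{soc}\,T_m(\mu)$. Conversely, any simple submodule $L_n(\nu)$ of $fT_m(\mu)$ generates an $S(m,d)$-submodule $S(m,d)fL_n(\nu)$ of $T_m(\mu)$. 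Its socle consists of simples with at most $n$ parts, as long as the socle of $T_m(\mu)$ does. Hence once we know $\mathrm{soc}_{S(m,d)}T_m(\mu)=L_m(M_p(\mu)')$, which has at most $n$ parts, we get $\mathrm{soc}_{S(n,d)}T_n(\mu)=L_n(M_p(\mu)')$. So it suffices to treat $n\ge d$.

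\emph{Step 2: the case $n \ge d$.} Here we use Ringel duality together with the Schur functor $\mathcal F: S(n,d)\text{-mod}\to k\Sigma_d\text{-mod}$. The tilting modules of $S(n,d)$ are the direct summands of $\Lambda^{\alpha}V=\Lambda^{\alpha_1}V\otimes\cdots$. Under the Schur functor, $\Lambda^\alpha V$ goes to the signed permutation module $\mathrm{sgn}\otimes M^\alpha$. Concretely, $T(\mu)$ corresponds to $\mathrm{sgn}\otimes Y^{\mu'}$, the twisted Young module. Since $\mu$ is $p$-regular, $T(\mu)$ occurs as a summand of $\Lambda^{\mu'}V$ with the expected label. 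Moreover $T(\mu)$ is injective relative to the image of $\mathcal G=\Hom_{k\Sigma_d}(V^{\otimes d},-)$: indeed $\Lambda^\alpha V\cong \mathcal G(\mathrm{sgn}\otimes M^\alpha)$, because exterior powers are $\mathcal G$ of signed permutation modules. Therefore
\[
\mathrm{soc}\,T(\mu)=\mathcal G\text{-image of }\mathrm{soc}_{\Sigma_d}(\mathrm{sgn}\otimes Y^{\mu'})\quad\text{(up to labels)},
\]
using that $\mathcal G$ is left exact and preserves socles of modules with $\mathcal G$-closed structure.

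Now $\mu$ is $p$-regular, so $\mu'$ is $p$-restricted and $Y^{\mu'}$ is the projective indecomposable module $P(D^{\mu'\,\text{reg}})$. Its socle is the simple $D$ labelled through the Mullineux map. Twisting by $\mathrm{sgn}$ applies $M_p$. Tracking labels, $D^\mu\otimes \mathrm{sgn}\cong D^{M_p(\mu)}$, and the simple $\mathcal F(L(\nu))=D_{\nu}$ carries the column-regular labelling $D_\nu = D^{\nu'}$-type. This identifies the socle as $\mathcal F L(M_p(\mu)')$, so $\mathrm{soc}\,T(\mu)=L(M_p(\mu)')$.

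The main obstacle is Step 2. It requires justifying that the socle of $T(\mu)$ is detected by the Schur functor: every simple submodule must have a nonzero image under $\mathcal F$, i.e.\ have a $p$-restricted label in the $GL$ convention. I would first try to show that $T(\mu)$ embeds in $(V^{\otimes d})^{\oplus}$ when $\mu$ is $p$-regular, via $T(\mu)\mid\Lambda^{\mu'}V\subseteq V^{\otimes d}$. Since $V^{\otimes d}$ is injective in the relevant sense and has socle consisting of $L(\nu)$ with $\mathcal F L(\nu)\neq 0$, this would give that every socle constituent is detected by $\mathcal F$. Keeping the conjugation and Mullineux conventions consistent is the other delicate point.
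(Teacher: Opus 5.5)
Your overall plan is sound, and the paper gives no argument of its own here; it only cites \cite[Corollary 4.4.3]{BNPS26}. The plan is to reduce to $n\ge d$ with $\fmn$, use Donkin's $\mathcal{F}T(\mu)\cong\mathrm{sgn}\otimes Y^{\mu'}$, and detect the socle through the Schur functor. The step that fails is the one you place at the centre of Step 2, namely $\Lambda^\alpha V\cong\mathcal{G}(\mathrm{sgn}\otimes M^\alpha)$. It fails exactly for $p=2$, which is the case in which the paper uses this proposition. For $p=2$ one has $\mathrm{sgn}\otimes M^\alpha=M^\alpha$. Moreover, $\mathcal{G}(M^\alpha)\cong\Hom_{k\Sigma_\alpha}(V^{\otimes d},k)$ has dimension equal to the number of $\Sigma_\alpha$-orbits on basis tensors, i.e.\ $\dim S^\alpha V$. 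This exceeds $\dim\Lambda^\alpha V$ as soon as some $\alpha_i\ge 2$; already for $d=2$, $\dim\mathcal{G}(k)=\binom{n+1}{2}\neq\binom{n}{2}$. So in characteristic $2$ you cannot deduce $T(\mu)\cong\mathcal{G}\mathcal{F}T(\mu)$ from $T(\mu)\mid\Lambda^{\mu'}V$. The appeal to $\mathcal{G}$ ``preserving socles of $\mathcal{G}$-closed modules'' then has nothing to rest on. For odd $p$ the identification is correct, and $\Hom_{S(n,d)}(L(\nu),T(\mu))\cong\Hom_{k\Sigma_d}(\mathcal{F}L(\nu),\mathcal{F}T(\mu))$ finishes at once.

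The argument in your last paragraph works for every $p$ and should replace the $\mathcal{G}$-argument outright.
\begin{itemize}
\item Antisymmetrization $v_1\wedge\cdots\wedge v_a\mapsto\sum_{\sigma}\mathrm{sgn}(\sigma)\,v_{\sigma(1)}\otimes\cdots\otimes v_{\sigma(a)}$ is a well-defined injective $GL_n$-map also when $p=2$. Hence $T(\mu)\hookrightarrow\Lambda^{\mu'}V\hookrightarrow V^{\otimes d}$.
\item For $n\ge d$, $V^{\otimes d}\cong S(n,d)\xi_\omega$ is contravariantly self-dual. So $\Hom(L(\nu),V^{\otimes d})\cong\Hom(S(n,d)\xi_\omega,L(\nu))\cong\mathcal{F}L(\nu)$, and no simple summand of $\mathrm{soc}\,T(\mu)$ is killed by $\mathcal{F}$.
\item By exactness, $\mathcal{F}(\mathrm{soc}\,T(\mu))$ is a semisimple submodule of $\mathcal{F}T(\mu)\cong\mathrm{sgn}\otimes Y^{\mu'}\cong P(D^{\mu})$. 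Here you use $Y^{\mu'}\cong P(D^{\mu}\otimes\mathrm{sgn})$ for $\mu'$ $p$-restricted. The socle of $P(D^\mu)$ is $D^\mu$.
\item Therefore $\mathrm{soc}\,T(\mu)=L(\nu)$ is simple, with $D^{\nu'}\otimes\mathrm{sgn}\cong\mathcal{F}L(\nu)\cong D^{\mu}$. This gives $\nu'=M_p(\mu)$.
\end{itemize}
Write the label chase in this explicit form rather than via ``$P(D^{\mu'\,\mathrm{reg}})$''.

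In Step 1 your first inclusion is backwards. Exactness of $\fmn$ gives $f\,\mathrm{soc}\,T_m(\mu)\subseteq\mathrm{soc}\,T_n(\mu)$. The reverse inclusion comes from your ``conversely'' argument. It uses $fN=fS(m,d)f\,L_n(\nu)=L_n(\nu)$, and $f\,\mathrm{soc}\,N=L_n(M_p(\mu)')\neq 0$ by the hypothesis on the number of parts. That argument also shows every simple submodule of $T_n(\mu)$ is the single subspace $f\,\mathrm{soc}\,T_m(\mu)$, so the socle is simple. With these repairs the proof is complete.
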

﻿
﻿
\section{Conjectures} \label{S:conjectures}
﻿
\subsection{Humphreys-Verma Conjecture}\label{SS:HVC} Let $G$ be a simple simply connected algebraic group scheme defined over the prime field ${\mathbb F}_{p}$. Curtis \cite{C60} showed that the irreducible 
representations for $G_{1}$ lift to $G$. One can extend this statement for $G_{r}$ and the lifting is unique. 
﻿
In 1973, Humphreys and Verma \cite{HV73} announced, without proof, a theorem that would have implied that the projective indecomposable $G_{r}$-modules also lift to $G$.  The proof that they had expected to work was later found to contain a gap, but over time the implication of their statement became known as the Humphreys-Verma Conjecture (HVC): 
﻿
\begin{conj} Let $G$ be a simple simply connected algebraic group scheme defined over ${\mathbb F}_{p}$.  For each $\la \in X_r$, there exists a finite-dimensional $G$-module $M(\lambda)$ such that $M(\lambda)|_{G_{r}}\cong Q_{r}(\lambda)$. 
\end{conj} 
﻿
Interestingly, one can show that the TMC is equivalent to the theorem stated in \cite{HV73}, so in a sense it, rather than the HVC, truly reflects what Humphreys and Verma had hoped would be true.  In any case, with the conventions being as they are, the TMC is stronger than the so-called HVC, and in fact is now known to be strictly stronger.
﻿
Most importantly, the HVC is still open for all $p$, and may well hold in full generality.  In very recent work, the verification of the HVC for all rank $2$ root systems was completed by Donkin and Geranios \cite{DG26}.  The only rank $2$ case where the TMC fails to hold is when $\Phi = \rm{G}_2$ and $p = 2$, and then only for the injective hull of the trivial module, $Q_1(0,0)$.  Donkin and Geranios were able to show that $Q_{1}(0,0)$ can be realized as a $G$-subquotient of the tilting module $T(2,2)$. 
﻿
﻿
\subsection{Special Root Systems} Given the results of Section \ref{S:recall} (and Section \ref{S:newcase} below), there is a large amount of evidence for the following conjecture. 
﻿
\begin{conj} Let $G$ be a simple simply connected algebraic group scheme defined over ${\mathbb F}_{p}$, and $\Phi$ be its associated root system. Then 
the Tilting Module Conjecture holds for all $p$ if and only if 
\begin{itemize} 
\item[(a)] $\Phi=\rm{A}_{n}$; 
\item[(b)] $\Phi=\rm{B}_{2}$. 
\end{itemize} 
\end{conj} 
﻿
As we have seen, the TMC holds for $\Phi=\rm{B}_{2}$ for all primes. So in order to prove the conjecture, one needs to verify the conjecture when $\Phi=\rm{A}_{n}$ for all primes $p$. 
An equivalent formulation using Schur functors was given in Theorem~\ref{T:TMCequivG}. 
﻿
\subsection{Jantzen's Question and the Humphreys-Verma Conjecture} In the 1980s, Jantzen raised the question of whether every $\nabla(\mu)$ admits a good $p$-filtration for every $\mu \in X^+$.    While the answer is now known to be ``no'' in general for small primes (cf. \cite{BNPS20, BNPS22b}), as evidenced by condition (b) of Theorem \ref{T:TMCcriterion}, there is a close connection between an affirmative answer to Jantzen's Question and the validity of the TMC.  
﻿
In Section \ref{SS:HVC}, we observed that a careful reading of the paper by Humphreys and Verma \cite{HV73} shows that the TMC was a natural updating of the HVC, even though the TMC is a stronger conjecture.  In this subsection we give a second reason that the TMC was a natural successor, namely that whenever Jantzen's Question has an affirmative answer, then the TMC is in that case equivalent to the HVC.
﻿
\begin{theorem}
Let $G$ be a simple simply connected algebraic group scheme defined over ${\mathbb F}_{p}$.  If Jantzen's Question has an affirmative answer for $G$, then the Humphreys-Verma Conjecture for $G$ is equivalent to the Tilting Module Conjecture for $G$.
\end{theorem}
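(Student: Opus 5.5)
The direction TMC $\Rightarrow$ HVC is immediate. If $T(2(p^r-1)\rho+w_0\lambda)|_{G_r}\cong \widehat{Q}_r(\lambda)$, then restricting further from $G_rT$ to $G_r$ gives $Q_r(\lambda)$. So the tilting module itself is a $G$-lift, and this direction needs no hypothesis on Jantzen's Question.

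For the converse, assume Jantzen's Question has an affirmative answer: every $\nabla(\mu)$ has a good $p$-filtration. Assume also that HVC holds. The plan is to verify conditions (a) and (b) of Theorem~\ref{T:TMCcriterion} for every $\sigma\in X_r$.

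Condition (b) should follow from the Jantzen hypothesis by the standard inductive argument. Write a good $p$-filtration of $\nabla(\hat\sigma)$ with factors $L(\mu_0)\otimes\nabla(\nu)^{(1)}$. Apply the hypothesis again to $\nabla(\nu)$, and use that $L(\mu_0)\otimes L(\mu_1)^{(1)}\cong L(\mu_0+p\mu_1)$ by Steinberg's tensor product theorem. Iterating $r$ times gives a good $(p,r)$-filtration. The one point to check is that each stage only produces finitely many factors of the required shape, which it clearly does.

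The real work is condition (a), namely that $\St_r\otimes L(\sigma)$ is tilting. First, $\St_r\otimes L(\sigma)$ has a good filtration exactly when $\St_r\otimes L(\sigma)$ is tilting, since the module is $\tau$-self-dual. By the Kildetoft--Nakano/Sobaje circle of ideas, this good filtration is equivalent to $\St_r\otimes L(\sigma)$ being a direct sum of modules of the form $T((p^r-1)\rho+\lambda)$ with $G_r$-restriction $\widehat Q_r$. So I would instead use HVC to produce good-filtration lifts. Let $M(\lambda)$ be a $G$-lift of $Q_r(\lambda)$. Then $\St_r\otimes L(\sigma)$ restricted to $G_r$ is injective, and I would aim to show it decomposes over $G$ into summands whose $G_r$-restrictions are the $Q_r(\lambda)$.

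The key step, and what I expect to be the main obstacle, is showing that a lift $M(\lambda)$ can be chosen with a good filtration, or at least that a lift exists inside a module with a good filtration. Here the Jantzen hypothesis enters through Sobaje's criterion. A $G$-module $N$ with a good $(p,r)$-filtration, for which every factor $L(\mu)\otimes\nabla(\nu)^{(r)}$ has $\mu$ restricted, satisfies
\[
\Ext^1_G\bigl(\Delta(\mu),\,\St_r\otimes N\bigr)=0 .
\]
I would compare the two lifts $M(\lambda)$ and $T(2(p^r-1)\rho+w_0\lambda)$ by a character argument. The $G_r$-injectivity of both modules, together with the Jantzen hypothesis, forces their $G$-characters to be determined by $G_rT$-data; the relevant vanishing is that of $\Ext^1_{G}(\nabla(\cdot)\otimes\nabla(\cdot)^{(r)},-)$. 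From this I would conclude that the tilting module has the same $G_r$-restriction as $M(\lambda)$. Concretely, I would try the following: embed $L(\lambda)$ in $M(\lambda)\otimes\nabla(\nu)^{(r)}$ for suitable $\nu$. Show that this module has a good filtration, using Jantzen's answer together with Sobaje's result that good filtration is preserved under tensoring with $\St_r$. Then show that it contains $T(2(p^r-1)\rho+w_0\lambda)$ as a summand, and finish by dimension counting over $G_r$.
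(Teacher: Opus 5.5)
\textbf{There is a genuine gap: condition (a) is never proved.} Your direction TMC $\Rightarrow$ HVC is correct. Your iteration showing that an affirmative answer to Jantzen's Question gives good $(p,r)$-filtrations, which is condition (b) of Theorem~\ref{T:TMCcriterion}, is also fine. But you never show that $\St_r\otimes L(\sigma)$ is tilting, and each concrete step you offer toward it is either unjustified or circular:
\begin{itemize}
\item The asserted equivalence between a good filtration on $\St_r\otimes L(\sigma)$ and a decomposition into modules $T((p^r-1)\rho+\lambda)$ restricting to $\widehat{Q}_r$ is essentially the TMC itself.
\item The vanishing $\Ext^1_G(\Delta(\mu),\St_r\otimes N)=0$ that you attribute to Sobaje requires $\St_r\otimes L(\mu_0)$ to have a good filtration, which is condition (a) again.
\item Showing that $M(\lambda)\otimes\nabla(\nu)^{(r)}$ has a good filtration presupposes a filtration property of the lift $M(\lambda)$, and that is exactly what is unknown.
\item The closing ``character argument'' and ``dimension count over $G_r$'' are not arguments that can be checked.
\end{itemize}
Note also that if you could show a lift $M(\lambda)$ has a good filtration, the same would apply to ${}^{\tau}M(\lambda)$. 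That module is also a lift, since ${}^{\tau}Q_r(\lambda)\cong Q_r(\lambda)$. So $M(\lambda)$ would be tilting and you would have the TMC outright, making the detour through Theorem~\ref{T:TMCcriterion} unnecessary.

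\textbf{The missing idea} is a short spectral-sequence argument applied directly to an arbitrary lift.
\begin{enumerate}
\item Reduce to $r=1$ and let $M$ be a $G$-module with $M|_{G_1}\cong Q_1(\lambda)$.
\item Since $M$ is $G_1$-projective, $\Ext^1_{G_1}(M,\nabla(\mu))=0$. The five-term exact sequence of the Lyndon--Hochschild--Serre spectral sequence for $G_1\unlhd G$ then gives
\[
\Ext^1_G(M,\nabla(\mu))\cong\Ext^1_{G/G_1}\bigl(k,\Hom_{G_1}(M,\nabla(\mu))\bigr).
\]
\item The functor $\Hom_{G_1}(M,-)$ is exact. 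It sends a layer $L(\mu_0)\otimes\nabla(\nu)^{(1)}$ of a good $p$-filtration of $\nabla(\mu)$ to $0$ or $\nabla(\nu)^{(1)}$. Hence $\Hom_{G_1}(M,\nabla(\mu))$ has a good filtration over $G/G_1$, and the right side vanishes.
\item So every lift has a Weyl filtration. Applying this to ${}^{\tau}M$ shows that $M$ also has a good filtration.
\item Therefore $M$ is an indecomposable tilting module, and it must be $T(2(p-1)\rho+w_0\lambda)$.
\end{enumerate}
This is how the paper argues, and it never needs condition (a).
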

﻿
\begin{proof}
As the TMC always implies the HVC, it is left to show the reverse implication under this hypothesis.  It suffices to assume that $r=1$.  
﻿
Suppose that $\lambda \in X_1$, and that $M$ is a $G$-module such that $M|_{G_1} \cong Q_1(\lambda)$.  Let $\mu \in X^+$.  Because $\Ext_{G_1}^1(M,\nabla(\mu))=0$, the five term exact sequence arising from the Lyndon-Hochschild-Serre spectral sequence for $G_1 \unlhd G$ gives an isomorphism
\[
\Ext_{G/G_1}^1(k,\Hom_{G_1}(M,\nabla(\mu))) \cong \Ext_G^1(M,\nabla(\mu)).
\]
As the functor $\Hom_{G_1}(M,-)$ is exact, if $\nabla(\mu)$ has a good $p$-filtration, then $\Hom_{G_1}(M,\nabla(\mu))$ has a good filtration as a $G/G_1$-module.  From this it follows that
\[
\Ext_{G/G_1}^1(k,\Hom_{G_1}(M,\nabla(\mu))) = 0,
\]
so that  $\Ext_G^1(M,\nabla(\mu)) = 0$.  Thus, $M$ has a Weyl filtration.  This argument shows that any $G$-lift of $Q_1(\lambda)$ has a Weyl filtration, and in particular that ${^{\tau}M}$ does.  Consequently,  $M$ is tilting, so we must have $M \cong T(2(p-1)\rho+w_0\lambda)$.
\end{proof}
﻿
﻿
\subsection{New Bounds for the TMC} Finally, we address how far the general bound on the prime $p$ depending on the root system can be lowered. For nearly 30 years, the TMC was known to hold for $p\geq 2h-2$. Recently, the authors proved that this 
bound can be lowered (cf. \cite[Theorem 1.2.1]{BNPS24}). 
﻿
\begin{theorem}\label{T:2h-4} Let $G$ be a simple algebraic group over an algebraically closed field of characteristic $p>0$ and $h$ be the Coxeter number associated to the root system for $G$. Then the following hold for $p\geq 2h-4$: 
\begin{itemize} 
\item[(a)]  Donkin's Tilting Module Conjecture, 
\item[(b)]  The Humpheys-Verma Conjecture.
\end{itemize} 
\end{theorem}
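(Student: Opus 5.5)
The plan is to apply Theorem~\ref{T:TMCcriterion} to get (a). Part (b) will then follow, because the TMC always implies the HVC: $T(2(p^r-1)\rho+w_0\lambda)$ is a finite-dimensional $G$-module whose restriction to $G_r$ is $\widehat{Q}_r(\lambda)|_{G_r}\cong Q_r(\lambda)$. So the work is to check conditions (a) and (b) of Theorem~\ref{T:TMCcriterion} for every $\sigma\in X_r$ when $p\geq 2h-4$. First I would reduce to $r=1$. The tensor decomposition $\St_r\otimes L(\sigma)\cong \bigotimes_i (\St_1\otimes L(\sigma_i))^{(i)}$ for $\sigma=\sum p^i\sigma_i$ handles condition (a), together with the standard fact that Frobenius twists and tensor products of modules of the form $\St_1\otimes(\text{tilting})$ stay tilting when $p\geq h-1$-type bounds hold. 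For condition (b), good $(p,r)$-filtrations of $\nabla(\hat\sigma)$ follow from good $p$-filtrations via the usual inductive argument of Kildetoft--Nakano/Sobaje.

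For condition (a) with $r=1$: $\St_1\otimes L(\sigma)$ is a direct summand of $\St_1\otimes\nabla(\sigma)$ as long as $L(\sigma)$ is a summand, which fails in general. Instead I would argue weight by weight. The module $\St_1\otimes L(\sigma)$ is injective over $G_1$, so it is tilting if and only if it has a good filtration, because it is $\tau$-self-dual. I would verify $\Ext^1_G(\Delta(\mu),\St_1\otimes L(\sigma))=0$ via the LHS spectral sequence for $G_1\unlhd G$. Because of $G_1$-injectivity this reduces to showing that $\Hom_{G_1}(\Delta(\mu),\St_1\otimes L(\sigma))^{(-1)}$ has a good filtration. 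The highest weights that can occur are bounded by $\langle (p-1)\rho+\sigma,\alpha_0^\vee\rangle\le 2(p-1)(h-1)$, so after untwisting they lie in a small region. For $p\geq 2h-4$ one checks that these weights lie in the closure of the bottom alcove or in the few alcoves just above it, where $\nabla$'s and tilting modules are controlled.

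For condition (b) with $r=1$: $\hat\sigma=2(p-1)\rho+w_0\sigma$. I would use the known criterion that $\nabla(\hat\sigma)$ has a good $p$-filtration if $\St_1\otimes\nabla(\hat\sigma)$ does, combined with Andersen's result that $\nabla(\lambda)\otimes\St_1$ has a good $p$-filtration in suitable ranges. Concretely, I would write $\hat\sigma=(p-1)\rho+\nu$ with $\nu=(p-1)\rho+w_0\sigma$ restricted. Then $\nabla(\hat\sigma)$ is a summand-type quotient of $\St_1\otimes\nabla(\nu)$, and the required filtration reduces to showing that the $G_1$-socle layers $\Hom_{G_1}(L(\gamma),\St_1\otimes\nabla(\nu))^{(-1)}$ have good filtrations. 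These are $\Ext$/$\Hom$ groups whose weights are $\le 2h-4$-ish multiples. This is exactly where the improvement from $2h-2$ to $2h-4$ enters, via the sharpened result that $\Ext^1_{G_1}(L(\lambda),L(\mu))^{(-1)}$ is tilting for $p\geq 2h-4$.

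The main obstacle is this last step. Lowering the bound requires handling the boundary weights that are no longer in the lowest alcove when $p=2h-4$ or $2h-3$. I would first try to show that all relevant twisted weights $\gamma$ satisfy $\langle\gamma+\rho,\alpha_0^\vee\rangle\le p$, so that the corresponding $\nabla(\gamma)=L(\gamma)=T(\gamma)$. The finitely many exceptional extremal weights would then be treated case by case by root system, using linkage and translation to a wall.
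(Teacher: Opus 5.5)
The paper does not prove this statement; it quotes it from \cite[Theorem~1.2.1]{BNPS24}, so your argument has to carry the full content, and it does not. There is a genuine gap.

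\textbf{What is fine.} Deriving (b) from (a) is correct, exactly as you say. Using Theorem~\ref{T:TMCcriterion} with $r=1$ is a reasonable frame. For the reduction to $r=1$, simply cite the fact used in Section~\ref{S:newcase} that the TMC reduces to $r=1$. Your claim that twisted tensor products ``stay tilting under $p\ge h-1$-type bounds'' is not justified.

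\textbf{The first gap: the alcove bound fails.} Everything that separates $p\ge 2h-4$ from the classical $p\ge 2h-2$ is left to ``one checks'', and the mechanism you propose fails. Take $\sigma=0$. Then $\hat\sigma=2(p-1)\rho=(p-2)\rho+p\rho$, so the Frobenius-twisted part is $\rho$, and $\langle\rho+\rho,\alpha_0^\vee\rangle=2h-2>p$ for $p\in\{2h-4,2h-3\}$. So the highest layer of a putative good $p$-filtration of $\nabla(2(p-1)\rho)$ already lies outside the region where $\nabla=L=T$. For instance, in type $\mathrm{A}_2$ with $p=3$, $\nabla(1,1)$ has dimension $8$ while $L(1,1)$ has dimension $7$. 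Such exceptional weights occur in every rank. They are therefore not a finite list that can be treated root system by root system, and you give no uniform argument.

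The same objection applies to your sketch of condition (a). After untwisting, your own bound only gives $\langle\nu,\alpha_0^\vee\rangle<2h-2$. The closed bottom alcove, however, requires $\langle\nu,\alpha_0^\vee\rangle\le p-h+1$, which is about $h-3$ at $p=2h-4$. Condition (a) is a separate input that needs either a citation or a real argument, and this sketch is neither.

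\textbf{The second gap: your route to condition (b).}
\begin{itemize}
\item Good $p$-filtrations do not pass to quotients. Exhibiting $\nabla(\hat\sigma)$ as a quotient of $\St_1\otimes\nabla(\nu)$ and examining $G_1$-socle layers of the larger module therefore says nothing about $\nabla(\hat\sigma)$.
\item Your ``known criterion'' is not available. If it means ``$\nabla(\hat\sigma)$ has a good $p$-filtration if $\St_1\otimes\nabla(\hat\sigma)$ has a good filtration'', that is Donkin's $(p,r)$-Filtration Conjecture, which the paper notes is verified only for $SL_2$. Since $\St_1\otimes\nabla(\hat\sigma)$ always has a good filtration, invoking it simply assumes condition (b). If it means descent of good $p$-filtrations from $\St_1\otimes\nabla(\hat\sigma)$, no such result is at hand.
\item The input you say carries the improvement, that $\Ext^1_{G_1}(L(\lambda),L(\mu))^{(-1)}$ is tilting for $p\ge 2h-4$, comes from the same work \cite{BNPS24} as the statement itself. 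The only relation between the two that the paper records runs the other way: the TMC forces these Ext groups to embed in tilting modules. You also never explain how tiltingness of $\Ext^1$ between simples would control $\Hom_{G_1}(L(\gamma),\St_1\otimes\nabla(\nu))^{(-1)}$.
\end{itemize}

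\textbf{What is missing.} The missing piece is precisely the theorem's content. You need a uniform proof, for $p$ equal to $2h-4$ or $2h-3$, that $\nabla(2(p-1)\rho+w_0\sigma)$ has a good $p$-filtration for all $\sigma\in X_1$, or some substitute for condition (b). In that range the twisted parts of these weights genuinely leave the bottom alcove.
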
 
﻿
From Theorem~\ref{T:counter}, the TMC can fail even when the prime is good. Furthermore, all the known counterexamples occur when $p<h$. We pose the following intriguing conjecture as a starting point to lower the bound on $p$. 
﻿
\begin{conj} Let $G$ be a simple simply connected algebraic group scheme defined over ${\mathbb F}_{p}$. Then 
the Tilting Module Conjecture holds for $p\geq h$. 
\end{conj} 
﻿
﻿
﻿
﻿
\section{New Type A Calculation} \label{S:newcase}
﻿
\subsection{Type $\rm{A}_4$} As noted in Theorem \ref{T:MainTheorem}, the TMC holds for types $\rm{A}_1$, $\rm{A}_2$, and $\rm{A}_3$. For type $\rm{A}_n$ with $n\geq 4$, from Theorem \ref{T:2h-4}, we know it holds if $p \geq 2n - 2=2h-4$.  In particular, for type $\rm{A}_4$ (i.e., the group $SL_5$), the TMC holds for $p \geq 6$.  That is, the only primes in question are 2, 3, and 5.    In this section, we successfully address the case $p = 2$.  
﻿
\begin{theorem}\label{T:A4} Donkin's Tilting Module Conjecture holds for $SL_5$ (i.e., type $\rm{A}_4$) and the prime $p=2.$
\end{theorem}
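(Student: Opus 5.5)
We will verify the hypotheses of Theorem~\ref{T:TMCcriterion} for $G=SL_5$, $p=2$, $r=1$. It is known that the TMC for $r=1$ implies it for all $r$, via the standard inductive argument on $r$ using twisted tensor products with $\St$. The set $X_1$ consists of the $2^4=16$ restricted weights $\sigma=\sum\epsilon_i\omega_i$ with $\epsilon_i\in\{0,1\}$. By the symmetry given by the graph automorphism, it suffices to treat roughly half of them. Two families of conditions must be checked for each $\sigma$.

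\textbf{Condition (a).} We must show that $\St_1\otimes L(\sigma)$ is tilting. For $p=2$ in type $\rm{A}$, the minuscule fundamental modules satisfy $L(\omega_i)=\nabla(\omega_i)=\Lambda^iV$. Hence $\St\otimes L(\omega_i)$ has a good filtration, and since it is $\tau$-self-dual it is tilting. For non-minuscule $\sigma$ (for example $\omega_1+\omega_2$, $\omega_1+\omega_4$, and so on), we realize $L(\sigma)$ as a $G$-summand or a suitable subquotient of tensor products of fundamental modules. Alternatively, we use the known criterion that $\St\otimes L(\sigma)$ is tilting whenever $L(\sigma)$ has a good filtration relative to a $p$-filtration argument. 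Failing that, we compute characters directly, using the known $p=2$ decomposition numbers for $SL_5$, and then compare the character of $\St\otimes L(\sigma)$ with sums of characters of tilting modules $T(\St+\nu)$, where each such tilting module restricts to some $\widehat{Q}_1$.

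\textbf{Condition (b).} We must show that $\nabla(\hat\sigma)$ has a good $p$-filtration, where $\hat\sigma=2\rho+w_0\sigma$. Here the Schur algebra machinery of Section~\ref{SS:TypeA} is used. We regard $\hat\sigma$ as a partition of $d$ with at most $5$ parts. By a Levi/parabolic reduction, $\nabla(\hat\sigma)$ is compared with $\nabla_J$-modules for proper Levi subgroups $L_J$ of type $\rm{A}_3$ and smaller, where good $p$-filtrations are known from the validity of the TMC in those ranks (Theorem~\ref{T:MainTheorem}). Where this reduction fails, we use the existence of good $p$-filtrations on tensor products. Concretely, $\nabla(\hat\sigma)$ is a summand of, or is filtered by, $\nabla(\hat\sigma-2\nu)\otimes\nabla(\nu)^{(1)}$-type pieces. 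To establish this, we check that the relevant tilting modules $T(\hat\sigma)$ have the predicted socle using Proposition~\ref{P:socle}: the socle $L(M_2(\mu)')$ forces the filtration, and this is presumably the role of the forthcoming Proposition~\ref{P:Filtration}.

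\textbf{Main obstacle.} The hardest step will be condition (b) for the weights with largest $\hat\sigma$, notably $\sigma=0$ with $\hat\sigma=2\rho$, and possibly $\sigma=\omega_2+\omega_3$. In these cases the induced module is large and the naive reductions do not apply. For these cases the plan is to do the following:
\begin{enumerate}
\item Pass to $GL_5$ and the Schur algebra $S(5,d)$.
\item Compute $M_2(\hat\sigma)'$ explicitly.
\item Use Proposition~\ref{P:socle} together with the exactness of $\fmn$ to pin down $\operatorname{soc}T(\hat\sigma)$.
\item Deduce the required good $p$-filtration, or equivalently verify the multiplicity condition of Theorem~\ref{T:TMCequivG}(b), by an explicit but finite composition-factor computation, using known decomposition matrices for small symmetric groups in characteristic $2$.
\end{enumerate}
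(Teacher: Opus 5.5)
Your plan depends on hypothesis (b) of Theorem~\ref{T:TMCcriterion} for \emph{every} $\si\in X_1$. That means a good $2$-filtration on $\nabla(\rho+\nu)$ for all sixteen restricted $\nu$, and in particular on $\nabla(2\rho)=\nabla(2,2,2,2)$. This is a genuine gap: the proposal never establishes it, and the paper never does either. Hypothesis (a) is not the problem, since it is available in the literature and the paper simply quotes it.

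None of the tools you list yields a good $2$-filtration on $\nabla(2\rho)$:
\begin{itemize}
\item A good $p$-filtration of a $G$-module cannot be read off from Levi subgroups. The TMC in rank $\le 3$ is a statement about tilting modules, not about good $p$-filtrations of induced modules. The paper's Levi reduction, Proposition~\ref{RedLevi}, is only an equality of character coefficients $a^{\la}_{\mu}=b^{\la}_{\mu}$ when $\la-\mu\in\mathbb{N}J$.
\item Knowing a socle via Proposition~\ref{P:socle} does not by itself ``force'' a filtration.
\item The pieces $\nabla(\hat\si-2\nu)\otimes\nabla(\nu)^{(1)}$ are not in general layers of a good $p$-filtration.
\item Condition (b) of Theorem~\ref{T:TMCequivG} is not equivalent to a good $p$-filtration; it is equivalent to the TMC itself. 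For $\la=0$ it concerns $\hat\la\leftrightarrow(8,6,4,2)$ with $M_2(\hat\la)'=(4,4,3,3,2,2,1,1)$, i.e.\ composition multiplicities over $S(8,20)$. That is far from a computation with ``small'' symmetric groups, and you do not carry it out.
\end{itemize}
Since Jantzen's question is known to fail for small primes, a good $2$-filtration on $\nabla(2\rho)$ cannot be taken for granted.

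The missing idea is to avoid needing (b) globally. The paper compares Sobaje's normalized characters $q(\la)$ and $t(\la)$. Because $T(\rho+\la)$ is a $G$-summand of $\St_1\otimes L(\la)$, and by Sobaje's linkage result, a discrepancy $a^{\la}_{\mu}\neq b^{\la}_{\mu}$ can only occur for $\mu$ in the small sets $A_{\la}$. These consist of dominant $\mu\neq\la$ with $L(\la)_{\mu}\neq0$ and $\mu-\rho\uparrow\la-\rho$. The paper then disposes of the finitely many pairs as follows.
\begin{itemize}
\item Pairs with $\la-\mu$ supported on a proper $J\subset\Delta$ are settled by Proposition~\ref{RedLevi} and the TMC for $\mathrm{A}_3$.
\item The pair $\la=(1,1,0,1)$, $\mu=\omega_2$ is settled by showing that $T(1,2,1,1)\cong\hQ_1(1,1,0,1)$ is projective and injective in the truncated category $\Mod(2,2,1,2)$.
\item The last pair $\la=\rho$, $\mu=\alpha_0$ is exactly your hard case, $T(2\rho)$ versus $\hQ_1(0)$. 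It is settled by the inductive criterion Proposition~\ref{P:Summand}, which needs a good $2$-filtration only on $\nabla(\rho+\alpha_0)=\nabla(2,1,1,2)$.
\end{itemize}
That single filtration (Proposition~\ref{P:Filtration}) comes from Koppinen's homomorphisms into $\nabla(0,2,2,0)$, $\nabla(2,1,2,0)$ and $\nabla(0,2,1,2)$, an $\Ext^1$ computation, and the socle of $T(2,1,2,0)$ from Proposition~\ref{P:socle}. In this way the case $\hat\si=2\rho$ is handled with no information whatsoever about $\nabla(2\rho)$.
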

﻿
Recall that to verify the TMC, it suffices to consider the $r = 1$ case (cf. \cite[Proposition 2.2.2]{BNPS22a}). 
﻿
﻿
\subsection{Reducing to a Levi}
The following facts were first observed by Donkin   in \cite[Proposition 2.7]{Don93} and  \cite[Proposition 1.5 (ii)]{Don93}, respectively. For more details the reader is referred to \cite[Sections 2.5, 2.6]{BNPS22b}.
﻿
Let $L_J$ be a Levi subgroup of G associated to $J \subseteq \Delta$ and  $\la \in X_r.$ Then one has an equality of $(L_J)_rT$-modules.
﻿
\begin{equation}\label{E:QLevi}
\hQ_{J,r}((p^r-1)\rho + w_{J,0}\la) = \bigoplus_{\nu \in \mathbb{N}J} \widehat{Q}_r((p^r-1)\rho+w_0 \la)_{(p^r-1)\rho+\la-\nu}.
\end{equation}
The  indecomposable tilting modules behave nicely when restricted to Levi subgroups. More precisely,  for any $\la \in X^+$ one obtains:
\begin{equation}\label{E:TLevi}
T_J( \la)=\bigoplus_{\nu \in \mathbb{N}J} T( \la)_{\la-\nu}.
\end{equation} 
Following Sobaje in \cite{So20}, define for $\la \in X_1$
$$t(\la) =\text{ch} (T((p-1)\rho+\la))/\chi((p-1)\rho),$$
$$q(\la)= \text{ch} (\hQ_1((p-1)\rho+w_0 \la))/\chi((p-1)\rho).$$
For $\la \in (X_{J})_1$, set
$$t_J(\la) =\text{ch} (T_J((p-1)\rho_J+\la))/\chi((p-1)\rho_J),$$
$$q_J(\la)= \text{ch} (\hQ_{J,1}((p-1)\rho_J+w_{J,0} \la))/\chi((p-1)\rho_J).$$
For $\mu \in X^+$, let $s(\mu) = \sum_{w \in W}e(w\mu)$ and $s_J(\mu) = \sum_{w \in W_J}e(w\mu),$which allows us to define $a_{\mu}^{\la},$ and $b_{\mu}^{\la}$ via 
\begin{equation}
q(\la) = \sum_{\mu \in X^+} a_{\mu}^{\la} s(\mu) \text{ and } t(\la) = \sum_{\mu \in X^+} b_{\mu}^{\la} s(\mu).
\end{equation}
Similarly, set 
\begin{equation}
q_J(\la) = \sum_{\{\mu \in X^+ | \la-\mu \in \mathbb{N}J\}} (a_{J})_{\mu}^{\la} \;s_J(\mu) \text{ and } t_J(\la) = \sum_{\{\mu \in X^+ | \la-\mu \in \mathbb{N}J\}}  (b_{J})_{\mu}^{\la} \; s_J(\mu).
\end{equation}
It follows now from (\ref{E:QLevi}) and (\ref{E:TLevi}):
﻿
\begin{prop}\label{RedLevi}
If the TMC holds for $L_J$ and $\la - \mu \in \mathbb{N}J$, then
$$a_{\mu}^{\la}=(a_{J})_{\mu}^{\la} \;=(b_{J})_{\mu}^{\la} \;=b_{\mu}^{\la}.$$
\end{prop}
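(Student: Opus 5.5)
The plan is to compare, for a fixed $\mu$ with $\la-\mu\in\mathbb{N}J$, the coefficient of $s(\mu)$ in $q(\la)$ and $t(\la)$ with the coefficient of $s_J(\mu)$ in $q_J(\la)$ and $t_J(\la)$, using the Levi decompositions (\ref{E:QLevi}) and (\ref{E:TLevi}). The outer equalities $a_\mu^\la=(a_J)_\mu^\la$ and $(b_J)_\mu^\la=b_\mu^\la$ should be purely formal consequences of those decompositions. The middle equality $(a_J)_\mu^\la=(b_J)_\mu^\la$ will come from the TMC for $L_J$ (with $r=1$), which gives $T_J((p-1)\rho_J+\la)|_{(L_J)_1}\cong \hQ_{J,1}((p-1)\rho_J+w_{J,0}\la)$. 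Since both sides carry compatible $T$-actions, the characters agree, so $q_J(\la)=t_J(\la)$ and the coefficients coincide.

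For the outer equalities I would first rewrite (\ref{E:QLevi}) and (\ref{E:TLevi}) in terms of $(p-1)\rho$ versus $(p-1)\rho_J$. Note that $(p-1)\rho-(p-1)\rho_J$ is orthogonal to all coroots in $J$, so it defines a one-dimensional $L_J$-module. Tensoring by it therefore identifies $T_J((p-1)\rho+\la)$ with $T_J((p-1)\rho_J+\la)$ shifted by that character, and similarly for $\hQ_{J,1}$.

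Applying (\ref{E:TLevi}) to $T((p-1)\rho+\la)$ says that $t_J(\la)$, up to this shift, records exactly the weight spaces of $T((p-1)\rho+\la)$ at weights $(p-1)\rho+\la-\nu$ with $\nu\in\mathbb{N}J$. We then have to pass from characters to the quotients by $\chi((p-1)\rho)$ and $\chi((p-1)\rho_J)$.

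I would use that $\chi((p-1)\rho)=e(-(p-1)\rho)\prod_{\a>0}(1+e(\a)+\dots+e((p-1)\a))$, and similarly for $J$. The factor coming from roots outside $\Phi_J^+$ involves only weights $\beta$ with $\beta\notin \mathbb{Z}J$ and $\beta>0$. Hence, when we truncate to the coset $\la+\mathbb{Z}J$ and keep only weights $\le\la$ in the $J$-direction, only the $\Phi_J^+$-part of the product matters. This will show that the truncation of $t(\la)=\sum b^\la_\mu s(\mu)$ to monomials $e(\nu')$ with $\la-\nu'\in\mathbb{N}J$ equals the corresponding truncation of $t_J(\la)$. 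The same holds for $q$, using (\ref{E:QLevi}), which is stated precisely as a decomposition of the $G_1T$-module $\hQ_1((p-1)\rho+w_0\la)$ into weight spaces with $\nu\in\mathbb{N}J$.

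Finally, for $\mu\in X^+$ with $\la-\mu\in\mathbb{N}J$, the orbit sum $s(\mu)$ restricted to the weights of $\la-\mathbb{N}J$ is $s_J(\mu)$ plus $W_J$-orbits of other dominant weights. Since the dominant-on-$J$ representatives are determined, comparing the coefficient of $e(\mu)$ on both sides (with $\mu$ dominant) yields $b^\la_\mu=(b_J)^\la_\mu$ and $a^\la_\mu=(a_J)^\la_\mu$. One must check that no other $s(\mu')$ with $\mu'\in X^+$ contributes $e(\mu)$. This holds because distinct dominant $\mu'$ have disjoint $W$-orbits.

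The main obstacle is this truncation step. It requires verifying carefully that dividing by $\chi((p-1)\rho)$ commutes with restricting to the weights $\la-\mathbb{N}J$, that is, that the non-$J$ factors of the Weyl character cannot create or cancel monomials in that region. Here I would use that all $W$-conjugates in the support lie below the highest weight, and argue via a triangularity or leading-term argument on the $\le$ ordering.
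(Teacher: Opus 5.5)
Your proposal is correct and follows the paper's argument. The paper only asserts that the outer equalities follow from (\ref{E:QLevi}) and (\ref{E:TLevi}) and the middle one from the TMC for $L_J$, and your truncation step supplies exactly the detail it leaves implicit. That step is cleanest if you write $\chi((p-1)\rho)=e((p-1)\rho)\prod_{\alpha>0}(1+e(-\alpha)+\dots+e(-(p-1)\alpha))$: since every weight of $t(\la)$ and $q(\la)$ is $\le\la$, only the constant term $1$ of the factor indexed by $\Phi^+\setminus\Phi_J^+$ contributes to weights in $(p-1)\rho+\la-\mathbb{N}J$. A coset argument alone would not suffice, and in fact $W\mu\cap(\la-\mathbb{N}J)=W_J\mu$ exactly, so no ``other orbits'' occur.
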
 
﻿
﻿
\subsection{Criterion for Summands}
The following criterion assures that certain injective $G_1$-modules do not appear as $G_1$-summands of a tilting module. This may be viewed as a sort of inductive version of Theorem \ref{T:TMCcriterion}.
﻿
\begin{prop}\label{P:Summand}
Let $\la, \mu \in X_1$ with $\mu <_{\mathbb{Q}} \la.$
Assume the following. 
\begin{itemize}
\item[(a)] For all $\gamma \in X_1,$ $ \St_1 \otimes L(\gamma)$ has a good filtration.
\item[(b)] $T((p-1)\rho + \mu)|_{G_1} = \hQ_1((p-1)\rho+w_0\mu).$
\item[(c)] $\nabla((p-1)\rho + \mu)$ has a good $p$-filtration.
\end{itemize}
Then $\hQ_1((p-1)\rho+w_0\mu)$ is not a $G_1$-summand of $T((p-1)\rho + \la).$
\end{prop}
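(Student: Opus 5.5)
The plan is to argue by contradiction. Write $T_\mu := T((p-1)\rho+\mu)$ and $T := T((p-1)\rho+\la)$. Suppose $\hQ_1((p-1)\rho+w_0\mu)$ is a $G_1$-summand of $T$. By (b) this means there are $G_1$-maps $f:T_\mu\to T$ and $g:T\to T_\mu$ with $g\circ f$ an automorphism of $T_\mu|_{G_1}$. The idea is to promote this $G_1$-splitting to a $G$-splitting. Since $T$ is indecomposable as a $G$-module and $\mu\neq\la$, a $G$-splitting would be absurd.

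To carry this out, I would introduce the rational $G$-modules
$$M:=\Hom_{G_1}(T_\mu,T)^{(-1)},\qquad N:=\Hom_{G_1}(T,T_\mu)^{(-1)},\qquad E:=\End_{G_1}(T_\mu)^{(-1)}.$$
Composition gives a $G$-map $M\otimes N\to E$. Because $T_\mu|_{G_1}$ is indecomposable, $\End_{G_1}(T_\mu)$ is local. Its radical is $G$-stable, so there is a $G$-map $E\to k$. By our assumption, the composite $\pi: M\otimes N\to k$ is nonzero, i.e.\ $\Hom_G(M,N^*)\neq 0$.

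The next step is to control $M$ and $N$ using (a) and (c).
\begin{itemize}
\item[(i)] The functor $\Hom_{G_1}(T_\mu,-)$ is exact, since $T_\mu$ is $G_1$-injective.
\item[(ii)] By (a) and \cref{T:TMCcriterion}-type reasoning, $T$ is a $G$-summand of modules of the form $\St_1\otimes L(\gamma)\otimes(\text{tilting})$.
\item[(iii)] Combining (i) and (ii), $M$ should acquire a good filtration, and dually ${}^\tau$-arguments should give $N$ a good filtration.
\item[(iv)] Hypothesis (c) is where $\nabla((p-1)\rho+\mu)$ enters. It is the top $\nabla$-quotient of $T_\mu$, and its good $p$-filtration guarantees that $\Hom_{G_1}(T,\nabla((p-1)\rho+\mu))^{(-1)}$ has a good filtration. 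This is the piece of $N$ that detects the nonvanishing of $\pi$.
\end{itemize}

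With $M$ having a good filtration and $N^*$ a Weyl filtration, a nonzero $G$-map $M\to N^*$ must factor through a pair $\nabla(\nu)\subseteq$ (section of) $M$ and $\Delta(\nu)$ in $N^*$. A weight count then pins down $\nu$. The weights $\nu$ of $M$ satisfy $p\nu+(p-1)\rho+\mu\le (p-1)\rho+\la$ (and similarly for $N$), so dominant $\nu$ occurring satisfies $p\nu\le_{\mathbb Q}\la-\mu$. I then want to show that the pairing $\pi$ is nonzero already on $M^G\otimes N^G$. This would produce genuine $G$-maps $f':T_\mu\to T$ and $g':T\to T_\mu$ with $g'\circ f'$ invertible modulo the radical, hence an automorphism. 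That makes $T_\mu$ a $G$-summand of the indecomposable $T\neq T_\mu$, a contradiction.

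The main obstacle is precisely this last reduction: from "$\pi\neq 0$ on $M\otimes N$" to "$\pi\neq 0$ on $G$-fixed points". My first attempt would use the good filtration of $M$ and Weyl filtration of $N^*$ to get $\Ext^1_G$-vanishing. I would then split off the trivial $\nabla(0)=\Delta(0)$ sections compatibly, using that the map $E\to k$ only sees the $G$-fixed part of $\End_{G_1}(T_\mu)^{(-1)}$ (the identity component). The strict inequality $\mu<_{\mathbb Q}\la$ is needed here to exclude the degenerate case in which the relevant fixed points come only from $T=T_\mu$.
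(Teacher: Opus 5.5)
Your reformulation is correct as far as it goes. $\Rad\End_{G_1}(T_\mu)$ is $G$-stable, so the induced pairing $\pi\colon M\otimes N\to k$ is a $G$-map. It is nonzero exactly when $\hQ_1((p-1)\rho+w_0\mu)$ is a $G_1$-summand of $T$. And nonvanishing of $\pi$ on $M^G\otimes N^G$ would make $T_\mu$ a $G$-summand of the indecomposable $T\neq T_\mu$.

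\textbf{The gap.} The implication ``$\pi\neq 0\Rightarrow \pi|_{M^G\otimes N^G}\neq 0$'' is the whole content of the proposition, and the mechanism you sketch for it fails.
\begin{itemize}
\item With $M$ good-filtered and $N^*$ Weyl-filtered, the filtrations point the wrong way. $\Ext^1_G(\Delta,\nabla)=0$ controls maps from Weyl-filtered to good-filtered modules, not the reverse. A nonzero map from a good-filtered module to a Weyl-filtered one need not match sections $\nabla(\nu)$, $\Delta(\nu)$, let alone be seen on trivial ones. Already for $SL_2$ with $p=2$, the quotient map $\nabla(2)\twoheadrightarrow L(0)=\Delta(0)$ is a nonzero $G$-pairing $\nabla(2)\otimes k\to k$, while $\nabla(2)^G=0$.
\item Your weight bound is unjustified. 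A $G_1T$-map $\hQ_1((p-1)\rho+w_0\mu)\otimes p\nu\to T$ need only be nonzero on the head. That gives only $(p-1)\rho+w_0\mu+p\nu\le(p-1)\rho+\la$. The sharper bound $p\nu\le\la-\mu$ holds only when the map is injective, and even that would not force $\nu=0$ in general.
\item Step (iii) is not derived from the hypotheses.
\item Hypothesis (c) is invoked in (iv) but never feeds into an argument.
\end{itemize}

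\textbf{What is missing.} You need a concrete $\Ext^1_G$-vanishing that lets $G_1$-data be lifted to $G$; this is where (a)--(c) enter in the paper. Put $L=L((p-1)\rho+w_0\mu)$, the $G_1$-socle and head of $T_\mu$, and $V=\Hom_{G_1}(L,T)$. The summand occurs iff $V\neq 0$.
\begin{itemize}
\item By \cite[Theorem 4.3.2]{BNPS22a}, (a)--(c) force the projection $\St_1\otimes T_\mu\twoheadrightarrow\St_1\otimes L$ to split. Hence $\St_1\otimes\Rad T_\mu$ has a good filtration, and dually $\St_1\otimes(T_\mu/L)$ has a Weyl filtration.
\item Hypothesis (a) gives $V$ a Weyl filtration.
\item Since $T$ is a $G$-summand of $\St_1\otimes T(\la)$ and $\St_1$ is self-dual,
\[
\Ext^1_G\big((T_\mu/L)\otimes V,\,T\big)\hookrightarrow \Ext^1_G\big(\St_1\otimes (T_\mu/L)\otimes V,\,T(\la)\big)=0 .
\]
\item So the $G$-inclusion $L\otimes V\hookrightarrow T$ extends to a $G$-map $T_\mu\otimes V\to T$. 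Sobaje's lifting criterion \cite[Lemma 4.2.2]{So18} then gives the conclusion.
\end{itemize}
An extension argument of this kind, rather than a comparison of filtration sections, is what you would need to promote the $G_1$-splitting to a $G$-splitting.
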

﻿
\begin{proof}
By \cite[Theorem 4.3.2]{BNPS22a}, it follows from (a), (b), and (c) that the canonical projection
$$\St_1 \otimes T((p-1)\rho + \mu) \twoheadrightarrow \St_1 \otimes L((p-1)\rho+w_0\mu)$$ splits. 
From the argument in the proof of \cite[Proposition 4.2.2]{BNPS22a}, this implies that $\St_1 \otimes \Rad(T((p-1)\rho + \mu))$ has a good filtration. Hence, its dual, $\St_1 \otimes T((p-1)\rho + \mu)/L((p-1)\rho+w_0\mu)$, has a Weyl-filtration. Moreover, (a) implies that $M=\Hom_{G_1}(L((p-1)\rho+w_0\mu), T((p-1)\rho+\la))$ has a Weyl-filtration.
It follows that\\ \\
$\Ext_G^1(T((p-1)\rho + \mu)/L((p-1)\rho+w_0\mu)\otimes M, T((p-1)\rho+\la))$
\begin{eqnarray*} &\hookrightarrow&\Ext_G^1(T((p-1)\rho + \mu)/L((p-1)\rho+w_0\mu)\otimes M, \St_1 \otimes T(\la))\\
&\cong & \Ext_G^1(\St_1 \otimes T((p-1)\rho + \mu)/L((p-1)\rho+w_0\mu) \otimes M,  T(\la))=0.
\end{eqnarray*}
The assertion now follows from \cite[Lemma 4.2.2]{So18}.
\end{proof}
﻿
\subsection{A good $2$-filtration}
Throughout this (and the next) subsection $G$ is always of type $\rm{A}_4$ and $p=2.$ To prove Theorem \ref{T:A4}, we need the following:
﻿
\begin{prop}\label{P:Filtration}
$\nabla(2,1,1,2)$ has a good $2$-filtration.
\end{prop}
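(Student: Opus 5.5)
Our plan is to exhibit an explicit filtration of $\nabla(2,1,1,2)$ whose subquotients have the form $L(\mu)\otimes\nabla(\nu)^{(1)}$ with $\mu\in X_1$, $\nu\in X^+$. We will build it by comparing characters with a module already known to have a good $2$-filtration, and then use Proposition \ref{P:socle} to control the remaining piece. The weight $(2,1,1,2)$ is $\hat{\sigma}=2\rho+w_0\sigma$ for $\sigma=(0,1,1,0)$. So this is exactly the one instance of condition (b) of Theorem \ref{T:TMCcriterion} not settled by general results such as $p\ge 2h-4$ or the rank~$\le 3$ Levi cases.

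\textbf{Step 1: character decomposition.} Write $(2,1,1,2)=(0,1,1,0)+2(1,0,0,1)$. Compute $\operatorname{ch}\nabla(2,1,1,2)$ via Weyl's formula. Determine the characters of the relevant simple restricted modules $L(\mu)$, $\mu\in X_1$, for $SL_5$ at $p=2$; these are known in small cases, e.g.\ from Lübeck's tables or via Schur algebra decomposition numbers. Then peel off terms $\operatorname{ch}L(\mu)\cdot\chi(\nu)^{(1)}$ greedily from the highest weight down. This produces a candidate multiset of $2$-filtration factors, starting with $L(0,1,1,0)\otimes\nabla(1,0,0,1)^{(1)}$. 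A necessary condition is that every coefficient in the expansion is nonnegative.

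\textbf{Step 2: realize the filtration.} Pass to $GL_5$ and the Schur algebra $S(5,d)$, viewing $(2,1,1,2)$ as a partition $\hat\lambda$ and $\nabla$ as a Schur module. We would compare $\nabla(2,1,1,2)$ with $\St_1\otimes \nabla(1,0,0,1)^{(1)}\otimes L(\ldots)$-type modules, or with tensor products $\nabla(a)\otimes\nabla(b)$ that have good $2$-filtrations because they are built from good-filtration and Steinberg pieces. The aim is a short exact sequence $0\to \nabla(2,1,1,2)\to N\to C\to 0$, with $N$ having a good $2$-filtration (for instance an induced module $\operatorname{ind}_B^G$ of a $2$-filtered $B$-module, via the tensor identity). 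We must then show that the cokernel $C$ also has a good $2$-filtration and that the map splits appropriately on the relevant terms.

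\textbf{Step 3: cohomological criterion.} Use the $\Ext$-vanishing criterion for good $(p,r)$-filtrations. It suffices to show that $\Ext^1_G(\Delta(\lambda')\otimes L(\mu'),\nabla(2,1,1,2))$-type groups vanish, or equivalently that $\Hom_{G_1}(L(\mu),\nabla(2,1,1,2))^{(-1)}$ has a good filtration, inductively along the socle. Proposition \ref{P:socle} enters when identifying socles. The relevant tilting module $T(\hat\lambda)$ has socle $L(M_2(\hat\lambda)')$, and we would use this to show that the $G_1$-socle layers of $\nabla(2,1,1,2)$ are those predicted in Step 1. Each $\Hom_{G_1}(L(\mu),-)$ is then a $\nabla$ for a small weight in rank~$\le4$. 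Such modules have good filtrations trivially, since the twisted weights $\nu$ are tiny, e.g.\ $(1,0,0,1)$ or $0$.

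\textbf{Main obstacle.} The hard step is Step 3 for the weight $\mu$ lying in the middle layer. There the $G_1$-homomorphism space could a priori be a nontrivial extension rather than a direct sum of $\nabla$'s. We would first try to rule this out by a weight/linkage argument: all possible twisted pieces $\nu$ are minuscule or zero, and two such $\nabla$'s admit no nonsplit extension in the wrong direction. If that fails, we would use Proposition \ref{P:socle} on the tilting module of the corresponding partition to pin down the socle exactly.
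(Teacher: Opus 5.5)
There is a genuine gap. Your Step~1 matches the paper's starting point, a character computation, but that only yields necessary conditions. Steps~2 and~3 contain no mechanism that actually produces the filtration, and the tools they invoke are not available.

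\textbf{Step~2.} The module $N$ is never specified. Even granting $N$ and $C$, a kernel of a surjection between modules with good $2$-filtrations need not have one: Frobenius-twist the $SL_2$ sequence $0\to\Delta(p)\to T(p)\to\nabla(p-2)\to 0$. So ``splits appropriately'' is the whole content of the step.

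\textbf{Step~3.} There is no $\Ext$-criterion for good $p$-filtrations at $p=2$. The Lyndon--Hochschild--Serre spectral sequence gives $\Ext^1_G(L(\lambda_0)\otimes\Delta(\lambda_1)^{(1)},L(\mu_0)\otimes\nabla(\mu_1)^{(1)})\cong\Hom_G(\Delta(\lambda_1),\Ext^1_{G_1}(L(\lambda_0),L(\mu_0))^{(-1)}\otimes\nabla(\mu_1))$. This is nonzero for suitable $\lambda_1$ whenever $\Ext^1_{G_1}(L(\lambda_0),L(\mu_0))\neq 0$. So the vanishing you want already fails for a single filtration factor, and is not even necessary. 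Likewise, $\Hom_{G_1}(L(\mu),-)$ only detects the $G_1$-socle, and the $G_1$-socle of $\nabla(2,1,1,2)$ is just $L(0,1,1,0)\otimes\nabla(1,0,0,1)^{(1)}$.

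\textbf{The difficulty is misidentified.} Your claim that the twisted weights are ``minuscule or zero'' is false. The character computation shows that the only candidate factors $L(\mu_0)\otimes\nabla(\mu_1)^{(1)}$ below the top that are not simple come from $\mu\in\{(0,2,2,0),(2,1,2,0),(0,2,1,2)\}$. These are $\nabla(0,1,1,0)^{(1)}$, $L(0,1,0,0)\otimes\nabla(1,0,1,0)^{(1)}$ and $L(0,0,1,0)\otimes\nabla(0,1,0,1)^{(1)}$, each needed once. What must be proved is that they actually occur as subquotients. They lie above the $G_1$-socle, so your Step~3 cannot see them.

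\textbf{Proposition~\ref{P:socle}.} It does not apply to $T(2,1,1,2)$: the partition $(6,4,3,2)$ has conjugate $(4,4,3,2,1,1)$, which has six parts. In any case it computes a $G$-socle, not $G_1$-socle layers.

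\textbf{How the paper realizes the three factors.} Koppinen's theorem plus the multiplicities $[\nabla(2,1,1,2):L(\mu)]=1$ give nonzero maps $\phi_i\colon\nabla(2,1,1,2)\to\nabla(\mu)$ for these three $\mu$. Case~2 in the proof of Theorem~\ref{T:A4} gives $T(2,1,1,2)\cong\hQ_1(0,1,1,0)$ as $G_1$-modules, and that case does not use this proposition. Hence $T(2,1,1,2)$, its quotient $\nabla(2,1,1,2)$, and each image of a $\phi_i$ have simple head $L(0,1,1,0)$.

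For $\phi_1$, the $\Ext^1_G$-data from the tables show that $\nabla(0,2,2,0)$ has a unique submodule of length two, namely $\nabla(0,1,1,0)^{(1)}$. The image of $\phi_1$ has length at least two, so it must contain this submodule.

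For $\phi_2$, Proposition~\ref{P:socle} is applied to $T(2,1,2,0)$, with partition $(5,3,2)$ and conjugate $(3,3,2,1,1)$. This shows $\nabla(2,1,2,0)$ has simple head $L(0,1,1,0)$. Since $[\nabla(2,1,2,0):L(0,1,1,0)]=1$, $\phi_2$ is onto. Then $\nabla(2,1,2,0)$ contains $L(0,1,0,0)\otimes\nabla(1,0,1,0)^{(1)}$, which is therefore a subquotient of $\nabla(2,1,1,2)$.

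The third factor follows by diagram symmetry. These homomorphisms into smaller induced modules, together with the head/socle control of their images, are the missing ingredients. Without them your outline does not get beyond the necessary conditions of Step~1.
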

﻿
\begin{proof}
The characters of the simple restricted modules for $\rm{A}_4$ and $p=2$ are known, see for example \cite[Table III.A.5]{DS}. Therefore, the composition factors and their multiplicities for any induced module can be calculated. Relevant to us are the following multiplicities:
\begin{equation} 
[\nabla(2,1,1,2):L(0,2,2,0)]=1,
\end{equation}
\begin{equation} 
[\nabla(2,1,1,2):L(2,1,2,0)]=[\nabla(2,1,1,2):L(0,2,1,2)]=1,
\end{equation}
and 
\begin{equation} 
[\nabla(2,1,2,0):L(0,1,1,0)]=[\nabla(0,2,1,2):L(0,1,1,0)]=1.
\end{equation}
Note that the only weights $\mu = \mu_0 +2 \mu_1$ with $\mu < (2,1,1,2)$ and $\nabla(\mu_1) \neq L(\mu_1)$ are $$\{(0,2,2,0),  (2,1,2,0), (0,2,1,2) \}.$$
One concludes from the above that for  $\nabla(2,1,1,2)$ to have a good $2$-filtration, each of  
$\displaystyle{\nabla(0,1,1,0)^{(1)},  L(0,1,0,0) \otimes \nabla(1,0,1,0)^{(1)} \text{ and } L(0,0,1,0) \otimes \nabla(0,1,0,1)^{(1)}}$ have to appear once as a subquotient.

Now  \cite[Theorem 7.1]{Kop} together with  (5.4.1) and (5.4.2) imply that there exist non-zero homomorphisms
$$\phi_1:\nabla(2,1,1,2) \to \nabla(0,2,2,0),$$
 $$\phi_2: \nabla(2,1,1,2) \to \nabla(2,1,2,0),$$
 $$\phi_3: \nabla(2,1,1,2) \to \nabla(0,2,1,2),$$
 each unique up to multiples.
 
As discussed in Case 2 in the proof of Theorem \ref{T:A4}, the tilting module  $T(2,1,1,2)$ is isomorphic to $\hQ_{1}(0,1,1,0)$ as a $G_1$-module. Note that Proposition \ref{P:Filtration} is not used in the proof of Case 2. Hence, $\nabla(2,1,1,2),$ which is a quotient of this tilting module, has simple head isomorphic to $L(0,1,1,0).$
 The same is true for the the heads of the images of the $\phi_i.$ They are all simple and isomorphic to $L(0,1,1,0).$

We will first look at the image of $\phi_1.$ From the calculations and tables in \cite[III.2.3, III.2.4]{DS} one concludes the following:
If $\mu \uparrow (0,2,2,0)$ and $\Ext^1_G(L(0,2,2,0), L(\mu))\neq 0,$ then $\mu$ is the zero weight and $\dim \Ext^1_G(L(0,2,2,0), k)=1.$ 
This implies that the  $\nabla(0,2,2,0)$ has a unique submodule of length two. It is isomorphic to $\nabla(0,1,1,0)^{(1)}$. Hence, any submodule of $\nabla(0,2,2,0)$ of length greater than or equal to two, including the image of $\phi_1,$ will have $\nabla(0,1,1,0)^{(1)}$ as a submodule. Therefore,  $\nabla(0,1,1,0)^{(1)}$ is a subquotient of $\nabla(2,1,1,2).$

In order to describe the image of $\phi_2$, we will apply techniques developed in \cite{BNPS26}; see the discussion in Section \ref{SS:TypeA}. We may regard the tilting module $T(2,1,2,0)$ over $SL_5$ as a module for the Schur algebra $S(5,10)$. Note that the partition corresponding to the $T$-weight $(2,1,2,0)$ is $\mu := (5,3,2),$ a $2$ regular partition of $10$.  We wish to apply Proposition \ref{P:socle} to identify the $S(5,10)$-socle of $T(\mu)$.   For $p =2$, the Mullineux map is the identity, and so $\si := M_p(\mu)' = \mu' = (5,3,2)' = (3,2,2,1,1)$ has 5 parts.  Hence, Proposition \ref{P:socle} implies that the $S(5,10)$-socle of $T(\mu)$ is $L(\si)$.  The partition $(3,3,2,1,1)$ translates back to the weight $(0,1,1,0)$ over $\rm{A}_4$. In particular, this means the $SL_5$-socle of $T(2,1,2,0)$ is the simple module $L(0,1,1,0)$. 

Since $\nabla(2,1,2,0)$ is a quotient of $T(2,1,2,0)$ one concludes that the head of $\nabla(2,1,2,0)$ is simple and isomorphic to $L(0,1,1,0)$. But by (5.4.3) the multiplicity of $L(0,1,1,0)$ in $\nabla(2,1,2,0)$ is one. Therefore, the map $\phi_2$ is surjective and its image isomorphic to $\nabla(2,1,2,0),$ which has $L(0,1,0,0) \otimes \nabla(1,0,1,0)^{(1)}$ as a submodule. 
One obtains the desired subquotient $L(0,1,0,0) \otimes \nabla(1,0,1,0)^{(1)}$ of $\nabla(2,1,1,2).$

The existence of the remaining subquotient $L(0,0,1,0) \otimes \nabla(0,1,0,1)^{(1)}$ by using $\phi_{3}$ follows from the symmetry of the Dynkin diagram.
\end{proof}
﻿
﻿
\subsection{Proof of the Theorem}
In this subsection, assume that $G$ is of type $\rm{A}_4$ and $p = 2$. To prove Theorem \ref{T:A4}, it suffices to show that, for all $\la \in X_1$ and $\mu \in X^+$, $a_{\mu}^{\la} = b_{\mu}^{\la}.$ Note that $T((p-1)\rho +\la)$ is a $G$-summand of $\St_{1}\otimes L(\la).$ It follows that  $a_{\mu}^{\la} \neq 0$ forces the multiplicity of the $\mu$-weight space in $L(\la),$ denoted by $L(\la)_{\mu},$ to be nonzero. In addition, we know from \cite{So20} that  $a_{\mu}^{\la} \neq 0$ also implies that $\mu-\rho  \uparrow \la- \rho.$ For $\la \in X_1$, set
$$A_{\la}= \{ \mu \in X^+\; | \; \mu \neq \la,\ L(\la)_{\mu} \neq 0, \text{ and } \mu-\rho  \uparrow \la- \rho\}.$$
The following table lists all the cases to be considered in order to show that $a_{\mu}^{\la} = b_{\mu}^{\la}$ always holds.  We omit the dual weights of  $\la$ that appear in Cases 3 and 4.
\vskip.2cm
\begin{tabular}{|l|l||l|l|}
\hline
Case & $\la \in X_1$ & $A_{\la}$ & $\la - \mu$\\
\hline
1 &$\omega_i,$ $1\leq i \leq 4$ & $\emptyset$ & \\
\hline
2 & $\omega_i+ \omega_j,$ $1\leq i<j \leq 4$& $\emptyset$ & \\
\hline
3 & $(1,1,1,0)$ & $\{(0,1,0,1)\}$ &$\alpha_1+\alpha_2+\alpha_3$ \\
\hline
4 &$(1,1,0,1)$ & $\{(0,1,0,0)\}$ &$\alpha_1+\alpha_2+\alpha_3+\alpha_4$ \\
\hline
5 & $(1,1,1,1)$ & $\{(2,0,1,0),$ & $\alpha_2+\alpha_3+\alpha_4$ \\
6& & $(0,1,0,2),$ & $\alpha_1+\alpha_2+\alpha_3$ \\
 7& & $(1,0,0,1)\}$ &$\alpha_1+2\alpha_2+2\alpha_3+\alpha_4$ \\
\hline
\end{tabular}
\vskip.2cm
﻿
Note that $a_{\la}^{\la}=b_{\la}^{\la}=1.$ Hence, there is nothing to show for Cases 1 and 2. For Cases 3, 5, and 6, since the TMC holds over type $\rm{A}_3$, we may apply Proposition \ref{RedLevi} with an appropriate Levi subgroup to obtain the desired result.
﻿
Consider Case 4, where $\la = (1,1,0,1)$ and the goal is to show that $T(2,2,1,2) = \hQ(0,1,0,0)$.   The only case in question is whether $a^{\la}_{\mu} = b^{\la}_{\mu}$ for $\mu = (0,1,0,0)$.  That is, we need to show that $\hQ_1((p-1)\rho +w_0\omega_2)=\hQ_1(1,1,0,1)$ does not appear as a $G_1$-summand of $T((p-1)\rho+ (1,1,0,1)) = T(2,2,1,2).$  Note that by Case 1 (when $\la = \omega_2$), $\hQ(1,1,0,1) \cong T((p-1)\rho + \omega_2) = T(1,2,1,1)$.  Let $\Mod((p-1)\rho + (1,1,0,1)) = \Mod((2,2,1,2))$ be the category of finite-dimensional $G$-modules with composition factors whose highest weights are less than or equal to $ (2,2,1,2)$.  Our goal is to show that $T(1,2,1,1)$ is projective and injective in this category. To that end, we make use of  \cite[Proposition 3.1.1]{BNPS22a} with $\sigma = (1,1,0,1),$ $\la_0=\omega_2,$  and $\la_1=0.$ Note that in this case $- w_0\sigma+ \la_0= \rho.$ The only dominant weights $\gamma$ with $2 \gamma \leq \rho$ are $0$ and $(1,0,0,1).$ Neither weight extends with the trivial weight. It follows from \cite[Proposition 3.1.1(a)]{BNPS22a} that $\St \otimes L(\omega_2)$ is projective  and injective as a $G$-module in $\Mod((2,2,1,2))$. The module $T(1,2,1,1)\cong \hQ_1(1,1,0,1)$ has simple $G$-socle $L(1,1,0,1).$ Since it is a summand of $\St_1 \otimes L(\omega_2)$, it is also projective and injective in $\Mod((2,2,1,2)).$ It can therefore not appear as a $G_1$-summand of $T(2,2,1,2)$, as any copy of $T(1,2,1,1)$ would necessarily split off from $T(2,2,1,2)$.  This implies that $a_{(0,1,0,0)}^{(1,1,0,1)} = b_{(0,1,0,0)}^{(1,1,0,1)}.$
﻿
This leaves Case 7. Note that the only possibility for the TMC to fail now is for $\hQ_1((p-1)\rho +w_0\alpha_0)=\hQ(0,1,1,0)\cong T(2,1,1,2)$ (by Case 2) to appear as a $G_1$-summand of $T(2(p-1)\rho).$ To exclude this, we wish to apply Proposition \ref{P:Summand} with $\lambda = (1,1,1,1)$ and $\mu = (0,1,1,0)$.   Considering the necessary hypotheses as given in Proposition \ref{P:Summand}, Condition (a) is known from \cite[Theorem 5.4.1]{BNPS19}, Condition (b) follows from Case 2 above, and Condition (c) follows from Proposition \ref{P:Filtration}.  Hence, the TMC holds for $\rm{A}_4$ and $p=2$.  
﻿
﻿
﻿
﻿
﻿
\providecommand{\bysame}{\leavevmode\hbox
to3em{\hrulefill}\thinspace}

﻿
\end{document}